\newcommand{\C}{\mathbb{C}}
\renewcommand{\P}{\mathbb{P}}
\newcommand{\R}{\mathbb{R}}
\newcommand{\Z}{\mathbb{Z}}
\newcommand{\cptwo}{\C\P^2}
\newcommand{\cpone}{\C\P^1}
\newcommand{\cptwobar}{\overline{\C\P}\,\!^2}
\newcommand{\std}{std}
\theoremstyle{plain}
\newtheorem*{theorem*}{Main Result}
\newtheorem{theorem}{Theorem}
\newtheorem{prop}[theorem]{Proposition}
\theoremstyle{definition}
\newtheorem{definition}[theorem]{Definition}
\theoremstyle{remark}
\newtheorem{remark}[theorem]{Remark}
\numberwithin{equation}{section}
\title{An introduction to Weinstein handlebodies for complements of smoothed toric divisors}
\author[Acu]{Bahar Acu}
\address[B.\ Acu]{Department of Mathematics \\ Northwestern University \\ Evanston \\ IL \\ U.S.A.}
\email{baharacu@northwestern.edu}
\author[Capovilla-Searle]{Orsola Capovilla-Searle}
\address[O.\ Capovilla-Searle]{Department of Mathematics \\ Duke University \\ Durham \\ NC \\ U.S.A.}
\email{ocapovilla@gmail.com}
\author[Gadbled]{Agn\`{e}s Gadbled}
\address[A.\ Gadbled]{Universit\'e Paris-Saclay \\ CNRS \\ Laboratoire de Math\'ematiques d'Orsay \\ 91405 \\Orsay \\ France}
\email{agnes.gadbled@universite-paris-saclay.fr}
\author[Marinkovi\'c]{Aleksandra Marinkovi\'c}
\address[A.\ Marinkovi\'c]{Matemati\v{c}ki Fakultet \\ Belgrade \\ Serbia}
\email{aleks@matf.bg.ac.rs}
\author[Murphy]{Emmy Murphy}
\address[E.\ Murphy]{Department of Mathematics \\ Northwestern University \\ Evanston \\ IL \\ U.S.A.}
\email{e\_murphy@math.northwestern.edu}
\author[Starkston]{Laura Starkston}
\address[L.\ Starkston]{Department of Mathematics \\ UC Davis\\ Davis \\ CA \\ U.S.A.}
\email{lstarkston@math.ucdavis.edu}
\author[Wu]{Angela Wu}
\address[A.\ Wu]{Department of Mathematics \\ University College London \\ London \\ United Kingdom}
\email{angela.wu.17@ucl.ac.uk}
\subjclass[2000]{Primary: 57R17. Secondary: 53D05, 53D10.}
\keywords{Weinstein domains, handlebody decompositions, Kirby diagrams, toric divisiors, handle attachments}
\begin{document}

\begin{abstract}
In this article, we provide an introduction to an algorithm for constructing Weinstein handlebodies for complements of certain smoothed toric divisors using explicit coordinates and a simple example. This article also serves to welcome newcomers to Weinstein handlebody diagrams and Weinstein Kirby calculus. Finally, we include one complicated example at the end of the article to showcase the algorithm and the types of Weinstein Kirby diagrams it produces.
\end{abstract}
\maketitle

\section{Introduction}

A key way to study closed symplectic manifolds is to break them down into two more easily understood parts: a neighborhood of a \emph{divisor} and a complementary \emph{Weinstein domain}. A divisor is a symplectic submanifold of co-dimension $2$. One can allow this submanifold to have certain controlled singularities, such as normal crossing singularities or more general singularities modeled on complex hypersurfaces. Donaldson proved that every symplectic manifold has a divisor \cite{Donaldson} and Giroux proved that this divisor can be chosen such that the complement of a regular neighborhood admits the structure of a Weinstein domain \cite{GirouxICM,Giroux}. A \emph{Weinstein domain} is a symplectic manifold with convex contact type boundary which can be broken down into symplectic handles modeled and glued as described by Weinstein \cite{Weinstein}. The symplectic topology of a Weinstein manifold is encoded in the attaching spheres of the handles. The attaching spheres can be represented by isotropic and Legendrian knots in the front projection. This Weinstein handlebody diagram gives a combinatorial/diagrammatic method to encode a symplectic manifold. There is a calculus of moves which relates different diagrams for equivalent Weinstein manifolds \cite{Gompf,DingGeiges}.

Recently, there has been increased study of symplectic divisors in symplectic manifolds, particularly in the case when the complement is Weinstein. Some of the motivation comes from homological mirror symmetry, where generalizing the link between coherent sheaves and Fukaya categories to larger classes of manifolds has required one to consider a mirror pair that includes not only a space, but also a divisor \cite{AurouxTduality}. One way to associate a Fukaya category for a divisor pair, is to look at the wrapped Fukaya category of the complement of the divisor. The Weinstein handle decomposition is key to understanding the wrapped Fukaya category, due to recent results that the co-cores of the handles generate the category \cite{CDGG, GPS1, GPS2}. The Floer homology of these co-cores is intrinsically tied to the Legendrian DGA of the Weinstein handlebody diagram~\cite{BEE,Ekholm, EkholmLekili} which is combinatorially calculated by Ekholm-Ng \cite{EkholmNg}.\smallskip

An important class of symplectic manifolds are \emph{toric manifolds}. These have been studied extensively as they form a large class of examples of integrable systems because of the symmetry provided by the Hamiltonian action of a torus on such manifolds. According to the famous Delzant classification, all compact symplectic toric $2n$-manifolds are uniquely (up to equivariant symplectomorphism) determined by convex $n$-dimensional polytopes, which correspond to the orbit space of the action. Much of the symplectic information can be encoded in the combinatorics of these polytopes known as Delzant polytopes.
 Moreover, toric manifolds have their origin in algebraic geometry, and they come by definition with a fibration by tori given by the action, so that they have been among the first cases of interest for homological mirror symmetry, especially in view of the SYZ philosophy (see for instance~\cite{Abouzaid}).
Every compact toric symplectic manifold is naturally equipped with a toric divisor. This is precisely the set of all points with non trivial stabilizer and the fixed points of the toric action are normal crossing singularities of the divisor. This can also be understood in terms of the moment map image polytope: the toric divisor is the preimage of the faces of the polytope under the moment map. The complement of a neighborhood of the divisor is symplectomorphic to a Weinstein domain whose completion is $T^*T^n$. (The complement of the divisor is the preimage of the interior of the polytope under the moment map, which is $T^n\times P$ where $P$ is a convex open subset of $\R^n$.) Hypersurfaces with normal crossing singularities can naturally be deformed to become less singular at the expense of increasing the topological complexity of the divisor and its complement. A toric manifold, together with its toric divisor or any smoothing of the divisor is a log Calabi-Yau pair which is a convenient setting for studying mirror symmetry of a space with a divisor~\cite{GrossHackingKeel, HackingKeating}.\smallskip

A manifold of dimension $4$ will have symplectic surface divisors. Normal crossing singularities in this dimension are just positive transverse intersections of two smooth branches, or \emph{nodes}. A deformation of this node smooths out the surface, trading the node for an annular tube which thus joins two different components or increases the genus of the surface. For a toric $4$-manifold, the complement of the (fully singular) toric divisor looks like $T^*T^2$, which has a natural Weinstein structure described by a diagram discovered by Gompf \cite{Gompf}. 

The toric divisor is the preimage of the facets of the Delzant polytope $\Delta$, and the nodes are the preimages of the vertices of $\Delta$. Since there is a one to one correspondence between the nodes and vertices, we will use the same notation to denote both a node and its moment map image vertex. Each vertex $V\in \Delta$, has a corresponding ray $r$ based at $V$ defined as the sum of the primitive edge vectors of $\Delta$ adjacent to $V$ and pointing outward from $V$. 
\begin{definition}\label{def:centered}
A toric manifold with a chosen subset $\{V_1, \ldots, V_n\}$ of the nodes is $\{V_1, \ldots , V_n\}$-centered, if the corresponding rays $r_1, \ldots , r_n$ all intersect at a common single point in the interior of its Delzant polytope.
\end{definition}
We show in \cite[Theorem 4.1]{ACGMMSW} that if a toric $4$-manifold is $\{V_1, \ldots , V_n\}$-centered, then the complement of the toric divisor smoothed at the nodes $\{V_1, \ldots, V_n\}$ has a Weinstein structure which we can explicitly describe. 

In this article, we explain an algorithm to produce a Weinstein handlebody diagram for the complement of any divisor obtained by smoothing the $\{V_1, \ldots, V_n\}$-nodes of a toric divisor in a $\{V_1, \ldots, V_n\}$-centered toric $4$-manifold. We prove the Weinstein handlebody produced by this algorithm is Weinstein homotopic to the complement of the smoothed toric divisor in our later paper \cite{ACGMMSW}. That article also explains how toric moment data determines the input to our algorithm, provides a detailed exploration of the centered hypothesis, and includes many more examples.
This article focuses on the most accessible example, as well as showcasing one fun case.

\subsection*{Main Results} 
There exists an algorithm to produce a Weinstein handlebody diagram for the complement of a toric divisor smoothed at the subset of nodes $\{V_1, \ldots, V_n\}$ in a $\{V_1, \ldots, V_n\}$-centered toric $4$-manifold.
\begin{enumerate}
\item Applying this algorithm to $\C\P^2$ smoothed in one node yields the self-plumbing of $T^*S^2$ as illustrated in Figure \ref{algorithm2}. Moreover, the same output is obtained for the complement of a toric divisor in \textit{any} toric 4-manifold smoothed in one node.
\begin{figure}[!ht]
\begin{center}
\begin{tikzpicture}
\node[inner sep=0] at (0,0) {\includegraphics[width=8 cm]{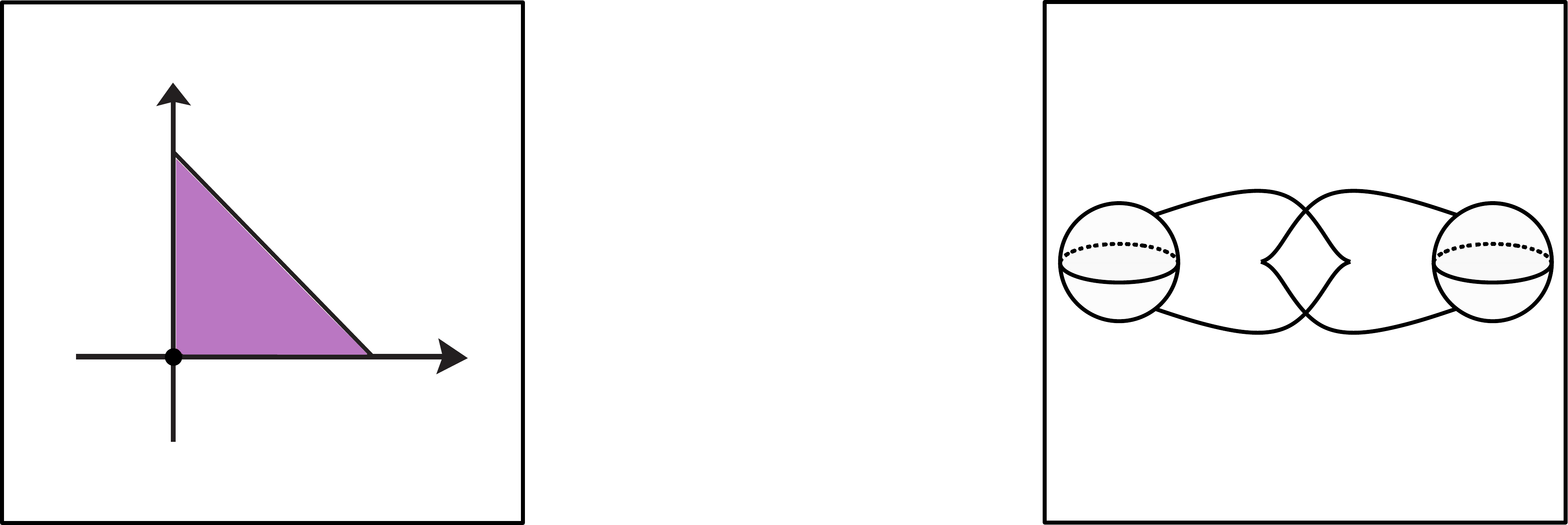}};
\draw[->, line width=0.8mm] (1,0) -- (1.3,0);
\draw[line width=0.8mm] (-1.2,0) -- (-1,0);
\draw (-1,-0.5) rectangle (1,0.5) node[pos=.5] {Algorithm};
\end{tikzpicture}
\caption{}
\label{algorithm2}
\end{center}
\end{figure}

\vspace{-.1in}

\item Applying the algorithm to a monotone $\C\P^2 \# 3\cptwobar$ smoothed at all six nodes yields a Weinstein manifold constructed by attaching $2$-handles to the $4$-ball along a $5$-component Legendrian link as in Figure~\ref{algorithm1}.
\begin{figure}[!ht]
\begin{center}
\begin{tikzpicture}
\node[inner sep=0] at (0,0) {\includegraphics[width=8 cm]{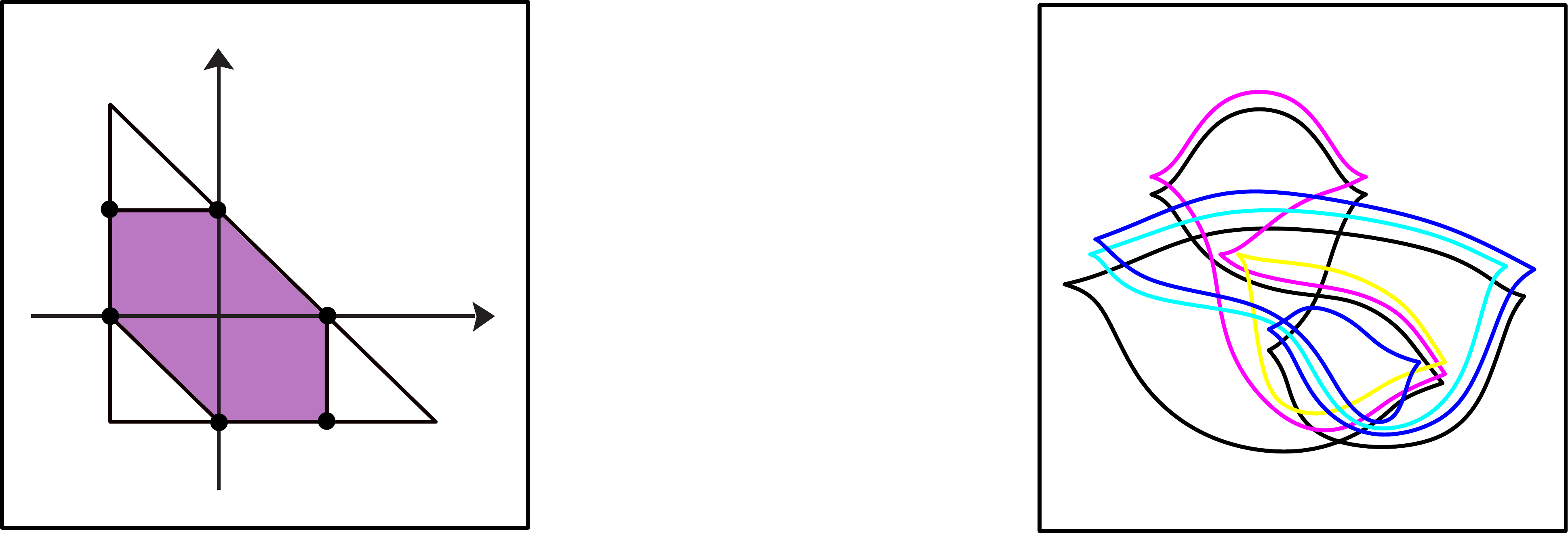}};
\draw[->, line width=0.8mm] (1,0) -- (1.3,0);
\draw[line width=0.8mm] (-1.2,0) -- (-1,0);
\draw (-1,-0.5) rectangle (1,0.5) node[pos=.5] {Algorithm};
\end{tikzpicture}
\caption{}
\label{algorithm1}
\end{center}
\end{figure}
\end{enumerate}

We would like to note that the handlebody diagram in the first example above has already been observed by Casals-Murphy in \cite{CM}, viewed as a Weinstein handlebody for the complement of an affine smooth conic in $\C^2$ (which is the same as the complement of a smooth conic together with a generic line in $\cptwo$, which is the once smoothed toric divisor in $\cptwo$). We obtain this diagram from a completely different method and provide a systematic recipe which applies much more generally. This first example provides an accessible way to explain the steps of our more general algorithm. \smallskip

This paper is organized as follows. In Section 2, we give definitions and discuss the relevant preliminary background on Weinstein Kirby calculus. In Section 3, we provide a picture into how to see the main structure of the handle attachment needed to obtain the complement of the smoothed divisor by describing the core and co-core of the handle in the smoothing local model. The remainder of the paper demonstrates the algorithm for producing the desired handlebody diagrams. In Section~4, we produce a Weinstein handlebody diagram for the complement of a toric divisor smoothed in one node and apply sequences of Kirby calculus moves to simplify the diagrams. In Section~5, we present a more complicated example, coming from $\cptwo \# 3\cptwobar$, with the toric divisor smoothed at all six nodes, to showcase the scope of applications and the corresponding Weinstein Kirby diagrams.

\subsection*{Acknowledgments} This project was initiated at the \href{https://icerm.brown.edu/topical_workshops/tw19-4-wiscon/}{2019 Research Collaboration Conference for Women in Symplectic and Contact Geometry and Topology (WiSCon)} that took place on July 22-26, 2019 at ICERM. The authors would like to extend their gratitude to the WiSCon organizers and the hosting institution ICERM and their staff for their hospitality. The authors would like to thank Lenhard Ng for useful conversations. BA and AM would like to thank the Oberwolfach Research Institute for Mathematics for hosting them during the completion of this project. AG was partially supported by Wallenberg grant no. KAW 2016-0440 and the Fondation Math\'ematique Jacques Hadamard. AM is partially supported by Ministry of Education and Science of Republic of Serbia, project ON174034.
LS is supported by NSF grant no. DMS 1904074. AW is supported by the Engineering and Physical Sciences Research Council [EP/L015234/1], the EPSRC Centre for Doctoral Training in Geometry and Number Theory (The London School of Geometry and Number Theory), University College London. OC-S is supported by NSF Graduate Research Fellowship under grant no. DGE-1644868.

\section{Weinstein handlebodies and Kirby calculus}


\subsection{Weinstein handle structure} 

A \emph{Liouville vector field} $Z$ for a symplectic manifold $(W,\omega)$ is a vector field satisfying $\mathcal{L}_Z \omega = \omega$. By Cartan's formula for the Lie derivative and the fact that the symplectic form is closed, this is equivalent to saying $d(\iota_Z\omega)=\omega$ (here $\iota$ denotes the interior product where $\iota_Z\omega(\cdot) = \omega(V,\cdot)$). In particular, when there exists a Liouville vector field, the symplectic structure is exact. The $1$-form $\lambda = \iota_Z\omega$ which satisfies $d\lambda=\omega$ is called the \emph{Liouville form}. For an introduction to these ideas, see \cite{mcduff2017introduction}. The primary use of Liouville vector fields is to glue symplectic manifolds along contact type boundaries. When the Liouville vector field is transverse to the boundary, it defines a contact structure on the boundary and can be used to identify a collared neighborhood of the boundary with a piece of the symplectization of that contact boundary.\smallskip

When Weinstein defined a model of a handle decomposition for symplectic manifolds \cite{Weinstein}, he equipped the handle with a Liouville vector field so that the gluing of the handle attachment could be performed using only contact information on the boundary. More specifically, the handle attachment is completely specified by a Legendrian attaching sphere, or an isotropic attaching sphere together with data on its normal bundle. The limitation is that the index of the handle is required to be less than or equal to $n$ in a $2n$ dimensional manifold. In particular, Weinstein $4$-manifolds must be built entirely from handles of index $0$, $1$, and $2$. \smallskip

The model Weinstein handle of index $k$ in dimension $2n$ for $k\leq n$ is a subset of $\R^{2n}$ with coordinates $(x_1,y_1,\cdots, x_n,y_n)$, with the standard symplectic structure $\omega = \sum_j dx_j\wedge dy_j$ and Liouville vector field
$$Z_k = \sum_{j=1}^k  \left( -x_j\partial_{x_j}+2y_j\partial_{y_j} \right) + \sum_{j={k+1}}^n  \left(\frac{1}{2}x_j\partial_{x_j}+\frac{1}{2} y_j\partial_{y_j}\right).$$

\begin{figure}
	\centering
	\includegraphics[scale=.5]{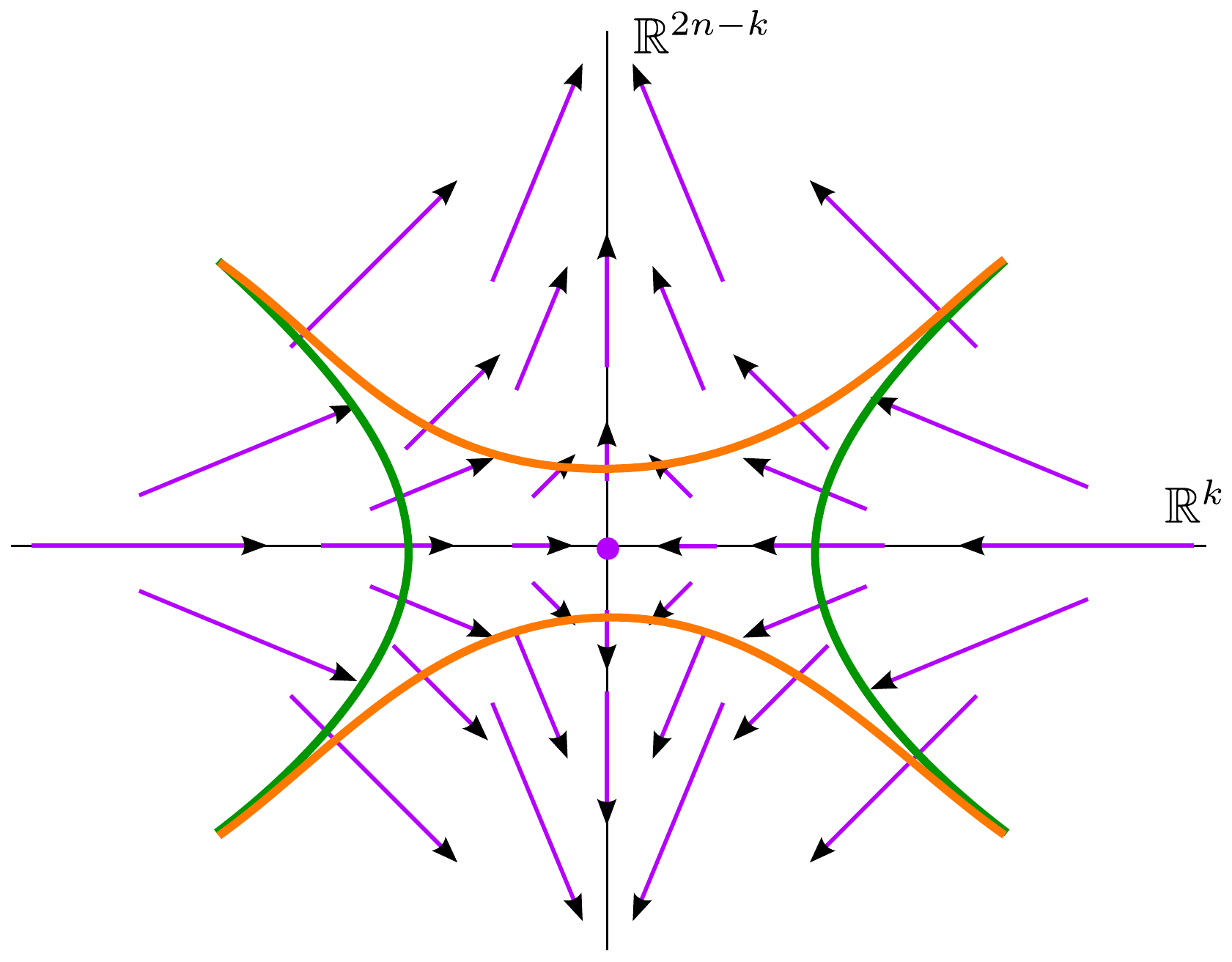}
	\caption{A sketch of the model for the Weinstein handle of index $k$, with the associated gradient-like Liouville vector field $Z_k$.}
	\label{fig:model}
\end{figure}

As with smooth handle theory, the handles are in one to one correspondence with the critical points of a Morse function. The Liouville vector field agrees with the gradient of such a Morse function (for some choice of metric), in other words, the Liouville vector field is \emph{gradient-like}. In the model index $k$ handle, the Liouville vector field is the gradient (with the standard Euclidean metric) of the function  
$$\phi_k = \sum_{j=1}^k \left(-\frac{1}{2} x_j^2 + y_j^2\right) + \sum_{j={k+1}}^n \left(  \frac{1}{4}x_j^2+\frac{1}{4}y_j^2 \right).$$

The handle can be considered to be the subset of $\R^{2n}$ given by $D^k\times D^{2n-k}$ where the first factor corresponds to the coordinates $(x_1,\cdots, x_k)$ and the second corresponds to the remaining coordinates $(x_{k+1},\cdots, x_n,y_1,\cdots, y_n)$. The key terminology for important parts of the handle is as follows.

\begin{itemize}
	\item The \emph{core} of the handle is $D^k\times \{0\}$ where $x_{k+1}=\cdots=x_n=y_1=\cdots=y_n=0$. This is the \emph{stable manifold} of flow-lines of $Z_k$ which limit positively towards the zero at the origin.
	\item The \emph{co-core} of the handle is $\{0\}\times D^{2n-k}$ where $x_1=\cdots=x_k=0$. This is the \emph{unstable manifold} of flow-lines of $Z_k$ which limit negatively towards the zero at the origin.
	\item The \emph{attaching sphere} is the boundary of the core, $S^{k-1}\times\{0\}$. This will be identified with an isotropic sphere in the boundary of the existing manifold to which the handle is attached.
	\item The \emph{attaching region} is a neighborhood of the attaching sphere $S^{k-1}\times D^{2n-k}$. This is the entire part of the handle which will be glued on to a piece of the boundary of the existing manifold when the handle is attached. Therefore the Liouville vector field $Z_k$ points inward into the handle along this part of the boundary (it is \emph{concave}).
	\item The \emph{belt sphere} is the boundary of the co-core $\{0\}\times S^{2n-k-1}$. It is a co-isotropic sphere in the boundary of the manifold obtained after attaching the handle.
\end{itemize}

In general we can piece together the Liouville vector fields on the handles, and put together adjusted versions of the locally defined Morse functions to get a global Morse function on the manifold. A Weinstein structure is often encoded analytically as a quadruple $(W,\omega, Z, \phi)$ where $W$ is a smooth manifold, $\omega$ is a symplectic structure on $W$, $Z$ is a Liouville vector field for $\omega$ on $W$, and $\phi$ is a Morse function such that $Z$ is the gradient-like for $\phi$.

\begin{remark}
	When a manifold with a Weinstein structure has contact type boundary, it is called a \emph{Weinstein domain}. Such a domain can be extended by a cylindrical end to make the Liouville vector field complete to give a non-compact infinite volume \emph{Weinstein manifold}.
\end{remark}


\subsection{Weinstein Kirby calculus} \label{s:kirby}

The data needed to encode the Weinstein domain are the attaching maps. In dimension $4$, the attaching map of a handle is completely determined by the Legendrian or isotropic attaching sphere. The attaching sphere of a $1$-handle is a pair of points. Diagramatically we draw a pair of $3$-balls implicitly identified by a reflection, representing the attaching region $S^0\times D^3$. The attaching sphere of a $2$-handle is a Legendrian embedded circle (a Legendrian knot). The $4$-dimensional $2$-handle attachment is determined by the knot together with a framing, but in the Weinstein case, the framing is determined by the contact structure. More specifically, the contact planes along a Legendrian knot determine a framing by taking a vector field transverse to the contact planes. The contactomorphism gluing the attaching region of the $2$-handle to the neighborhood of the Legendrian identifies the product framing in the $2$-handle with the $tb-1$ framing. Here $tb$ denotes the contact framing (Thurston-Bennequin number), which is identified with an integer by looking at the difference between the contact framing and the Seifert framing (this must be appropriately interpreted when the diagram contains $1$-handles--see \cite{Gompf}). \smallskip

The diagram we draw should specify the Legendrian attaching knots in $S^3$ along with the pairs of $3$-balls indicating the attachments of the $1$-handles. By removing a point away from these attachments, we reduce the picture in $S^3$ to a picture in $\R^3$. After a contactomorphism, the contact structure on $\R^3$ is $\ker(dz-ydx)$ in coordinates $(x,y,z)$. The \emph{front projection} is the map $\pi: \R^3\to \R^2$ with $\pi(x,y,z)=(x,z)$. A Legendrian curve in this contact structure is tangent to the contact planes, which happens precisely when the $y$-coordinate is equal to the slope $\frac{dz}{dx}$ of the front projection. Therefore, Legendrian knots can be recovered from their front projections with the requirement that the diagram has no vertical tangencies (instead it will have cusp singularities where the knot is tangent to the fibers of the projection) and the crossings are always resolved so that the over-strand is the strand with the more negative slope (we orient the $y$-axis into the page to maintain the standard orientation convention for $\R^3$ so the over-strand is the strand with a more negative $y$-coordinate). In these front projections, the contact framing $tb$ can be computed combinatorially in terms of the oriented crossings and cusps of the diagram when the diagram is placed in a standard form where the pairs of $3$-balls giving the attaching regions of $1$-handles are related by a reflection across a vertical axis. Namely, $tb$ of a Legendrian knot is the difference of the writhe of the knot and half the number of cusps in the front projection. For a thorough introduction to Legendrian knots, see \cite{ETNYRE2005105}.\smallskip

The set of moves that relate Weinstein handlebody diagrams in Gompf standard form for equivalent Weinstein domains includes Legendrian Reidemeister moves (including how they interact with the $1$-handles) listed in \cite{Gompf}, see Figures \ref{Reid} and \ref{Gompf456}, as well as \textit{handle slides}, and \textit{handle pair cancellations and additions}, shown in \cite{DingGeiges}. Given two $k$-handles, $h_1$ and $h_2$, a \textit{handle slide} of $h_1$ over $h_2$ is given by isotoping the attaching sphere of $h_1$, and pushing it through the belt sphere of $h_2$. We depict a 1-handle slide (along with intermediate Reidemeister and Gompf moves) in Figure \ref{1hslide} and a 2-handle slide in Figure \ref{2hslide}. A 1-handle $h_1$ and a 2-handle $h_2$ can be cancelled, provided that the attaching sphere of $h_2$ intersects the belt sphere of $h_1$ transversely in a single point. We call this a \textit{handle cancellation} and the pair of handles a \textit{cancelling pair}. Likewise a cancelling pair can be added to a Weinstein handlebody diagram, as depicted in Figure \ref{2hcancel}. When multiple 2-handles intersect a single 1-handle, the simplification in Figure \ref{2handles} can be performed to reduce the overall complexity of a Weinstein diagram.\smallskip

\begin{figure}[h!]
	\begin{center}
		\includegraphics[width=8cm]{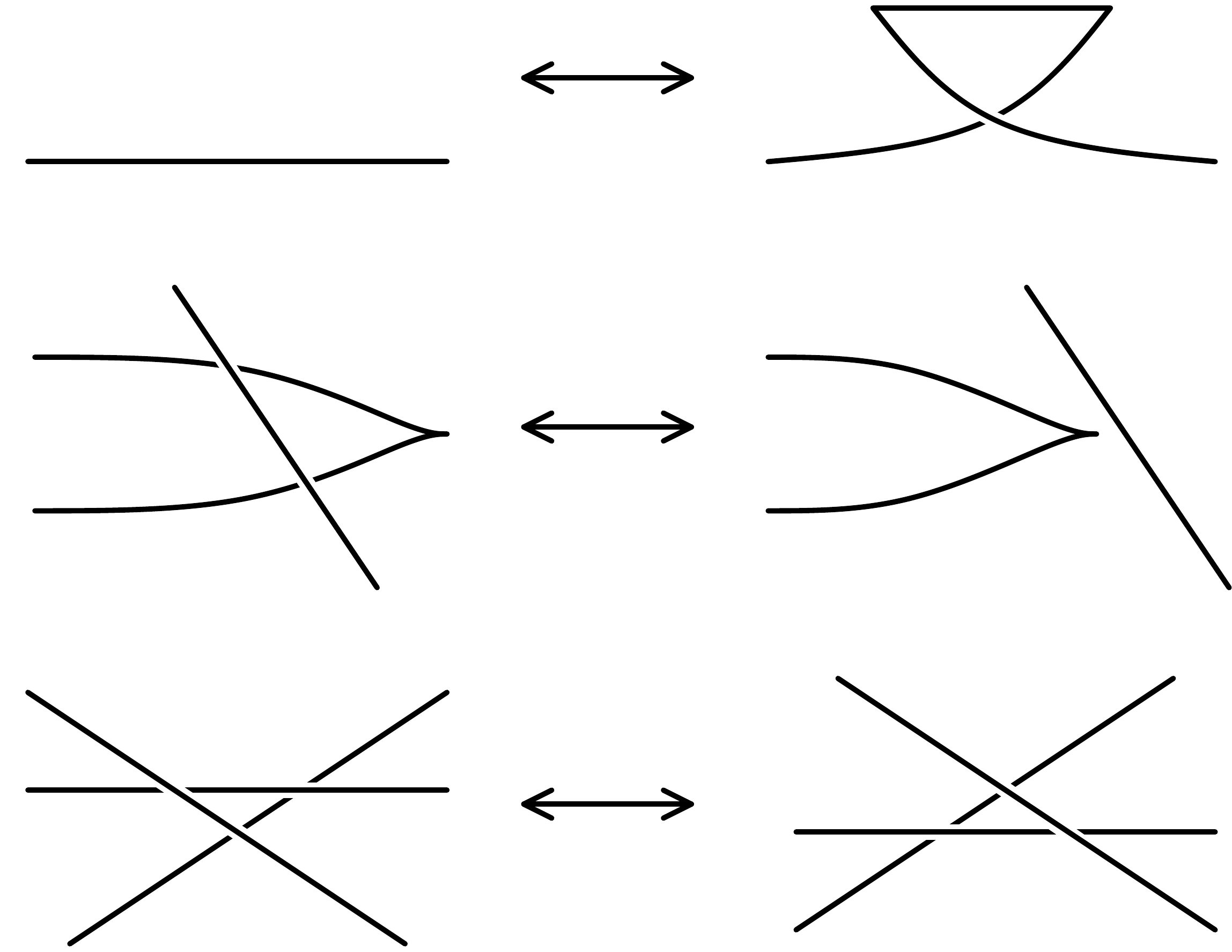}{}
		\caption{The Legendrian Reidemeister moves, up to 180 degree rotation about each axis, where the top, middle, and bottom moves are called Reidemeister I, Reidemeister II, and Reidemeister III, respectively.}
		\label{Reid}
	\end{center}
\end{figure}

\begin{figure}[h!]
	\begin{center}
		\includegraphics[width=12cm]{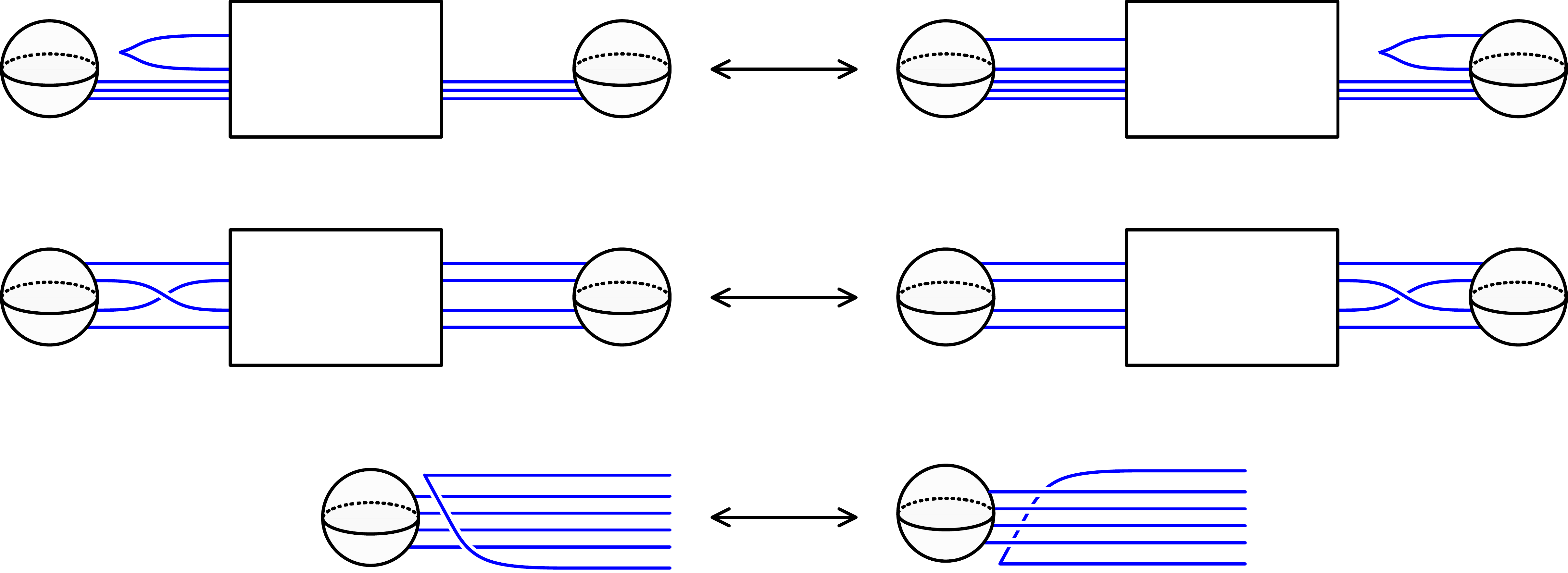}
		\caption{Gompf's three additional isotopic moves, up to 180 degree rotation about each axis. The top, middle, and bottom moves are called Gompf move 4, Gompf move 5, and Gompf move 6, respectively.}
		\label{Gompf456}
	\end{center}
\end{figure}

\begin{figure}[h!]
	\begin{center}{}
	\begin{tikzpicture}
			\node[inner sep=0] at (0,0) {\includegraphics[width=12cm]{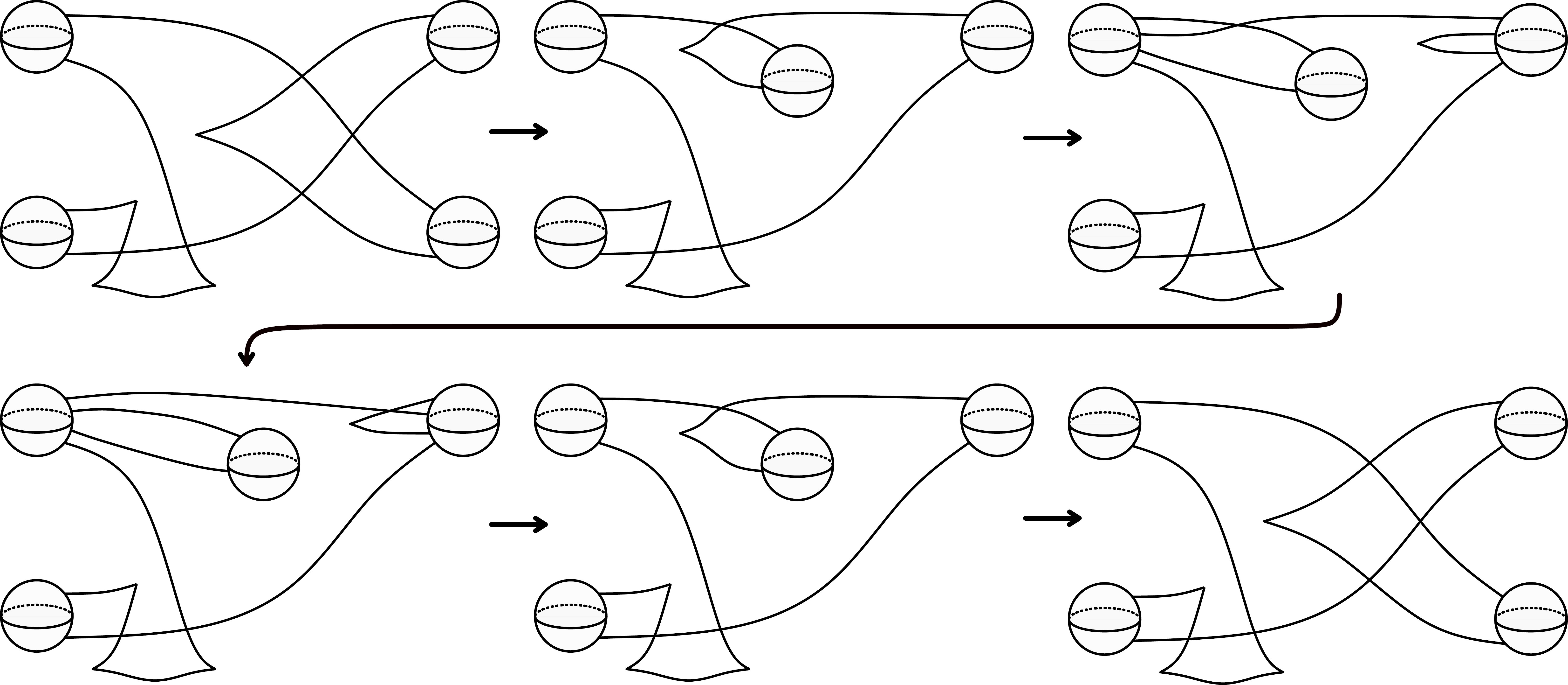}};
			\node at (-2.05,1.3){\footnotesize (1)};
			\node at (2.05,1.28){\footnotesize (2)};
			\node at (0.5,0.5){\footnotesize (3)};
			\node at (-2.05,-1.7){\footnotesize (4)};
			\node at (2.05,-1.65){\footnotesize (5)};
		\end{tikzpicture}
		\caption{An example of a 1-handle slide on $T^*T^2$ consisting of the following sequence of isotopies: (1) Reidemeister 2, (2) Gompf move 4, (3) Gompf move 5, (4) Gompf move 4, (5) Reidemeister 2.}
		\label{1hslide}
	\end{center}
\end{figure}

\begin{figure}[h!]
	\begin{center}
		\includegraphics[width=7cm]{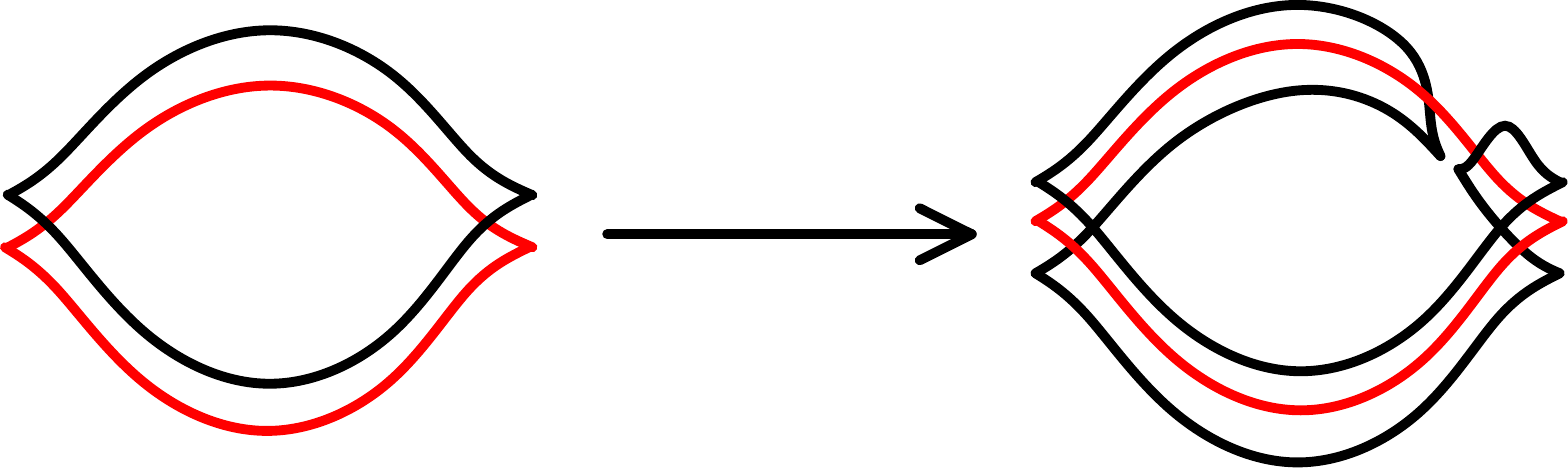}
		\caption{An example of a 2-handle slide of the black unknot over the red unknot.}
		\label{2hslide}
	\end{center}
\end{figure}{}

\begin{figure}[h!]
	\begin{center}
		\includegraphics[width=11cm]{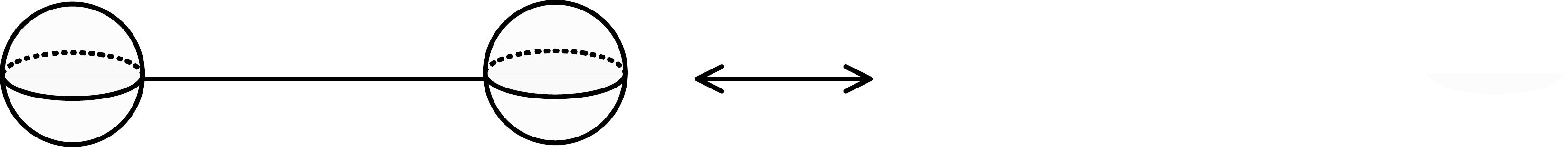}
		\caption{An example of a 1-handle cancelling with a 2-handle.}
		\label{2hcancel}
	\end{center}
\end{figure}{}

\begin{figure}[h!]
	\begin{center}
		\includegraphics[width=8cm]{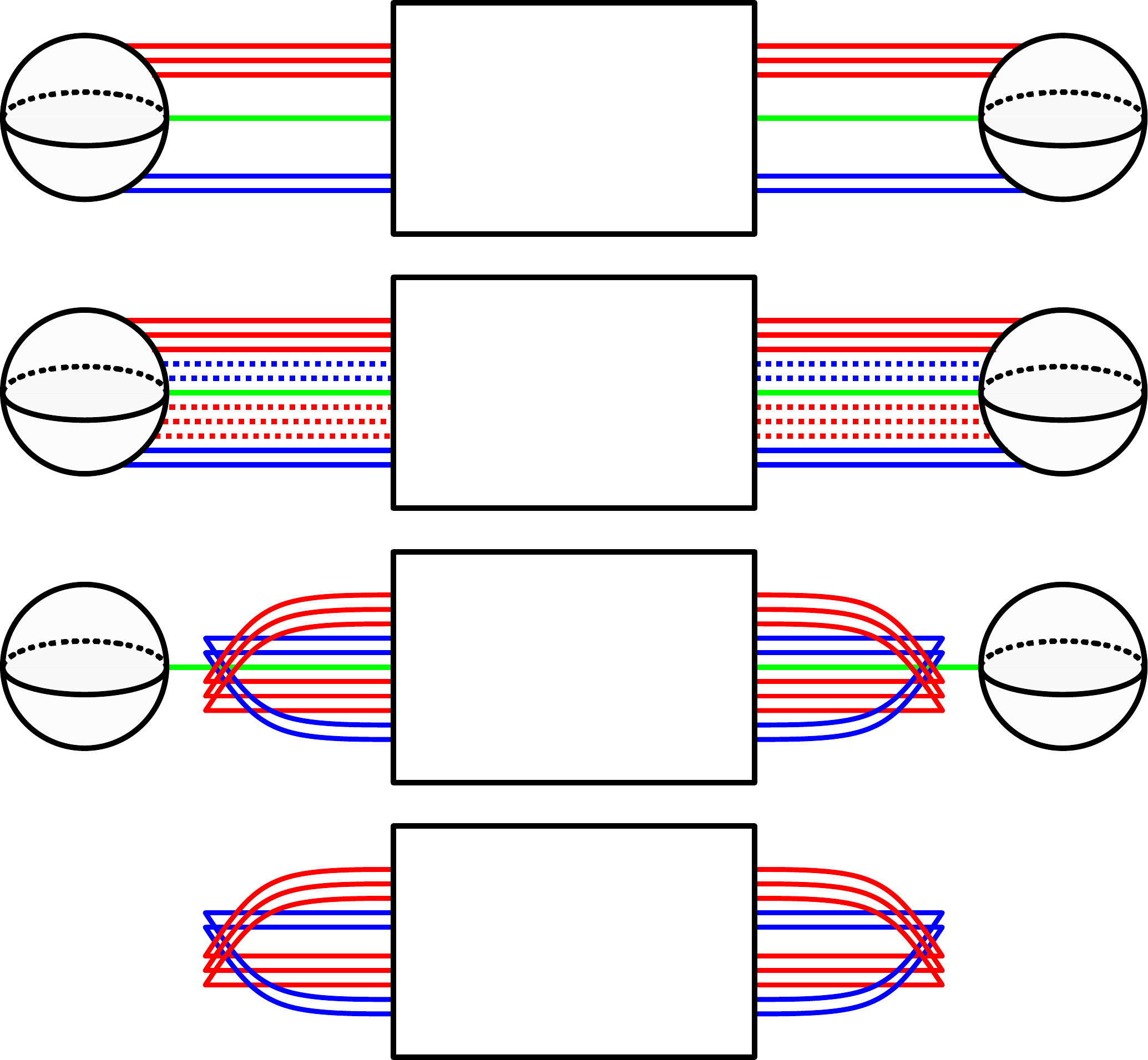}
		\caption{An example of handle slides and cancellations when multiple 2-handles pass through a 1-handle. Red and blue 2-handles are slid over the central green 2-handle. The green 2-handle is then cancelled with the 1-handle.}
		\label{2handles}
	\end{center}
\end{figure}{}

Before approaching our goal of presenting an algorithm to construct Weinstein--Kirby diagrams for complements of smoothed toric divisors, we will start with the unsmoothed case, where the complement has a Liouville completion, $T^*T^2$. The Legendrian handlebody we present, was originally found by Gompf \cite{Gompf}. It follows from that article that this handlebody gives a Stein/Weinstein structure on the smooth manifold $D^*T^2$ (which is the trivial bundle $D^2\times T^2$). More generally, Stein handlebody diagrams are given on the smooth manifolds $D^*\Sigma$ for any surface in \cite{Gompf}. In \cite[Theorem 7.1]{ACGMMSW}, we show that the Weinstein structures induced on these diagrams are Weinstein homotopic to the canonical co-tangent Weinstein structure on $D^*\Sigma$.

For $T^*T^2$ specifically, it is known that there is a unique Weinstein fillable contact structure on the boundary $T^3$ and one can then deduce that the Gompf handlebody agrees with the canonical symplectic structure by Wendl's result that $S^*T^2$ has a unique Stein/Weinstein filling up to deformation \cite{Wendl}. To see that the diagram in Figure \ref{TstarT2} represents $D^*T^2\cong D^2\times T^2$ smoothly, we can start with a handle decomposition for $T^2$ with one 0-handle, two 1-handles and a single 2-handle. Thickening this diagram to a $4$-dimensional handlebody yields a disk bundle over $T^2$ with Euler number $e$, agreeing with the framing coefficient of the $2$-handle attachment. One then needs to put the diagram into Gompf standard form as seen in Figure \ref{stdTstarT2} by sliding the uppermost attaching ball below the attaching ball on the right so that both $1$-handles are related by a reflection across the same vertical axis. Then we must realize the knot as a Legendrian knot by replacing vertical tangencies by cusps and making sure the crossings always have the over-strand corresponding to the more negative slope. The most obvious way to do this yields a Legendrian knot whose Thurston-Bennequin framing is $0$ (see the diagram on the right of Figure \ref{stdTstarT2}), so this would correspond to a $D^2$ bundle over $T^2$ with Euler number $-1$. By wrapping one strand around the lower left attaching ball as in Figure \ref{TstarT2}, we obtain a smoothly isotopic picture where the new Legendrian has $tb=1$, so the Euler number is $1-1=0$ as needed for $D^*T^2$. Since this is one Weinstein filling of $S^*T^2$, and we know that such fillings are unique up to deformation, it must agree with the canonical co-tangent symplectic structure on $D^*T^2$.

\begin{figure}[h!]
    \centering
    \subfigure{\includegraphics[width=0.25\textwidth]{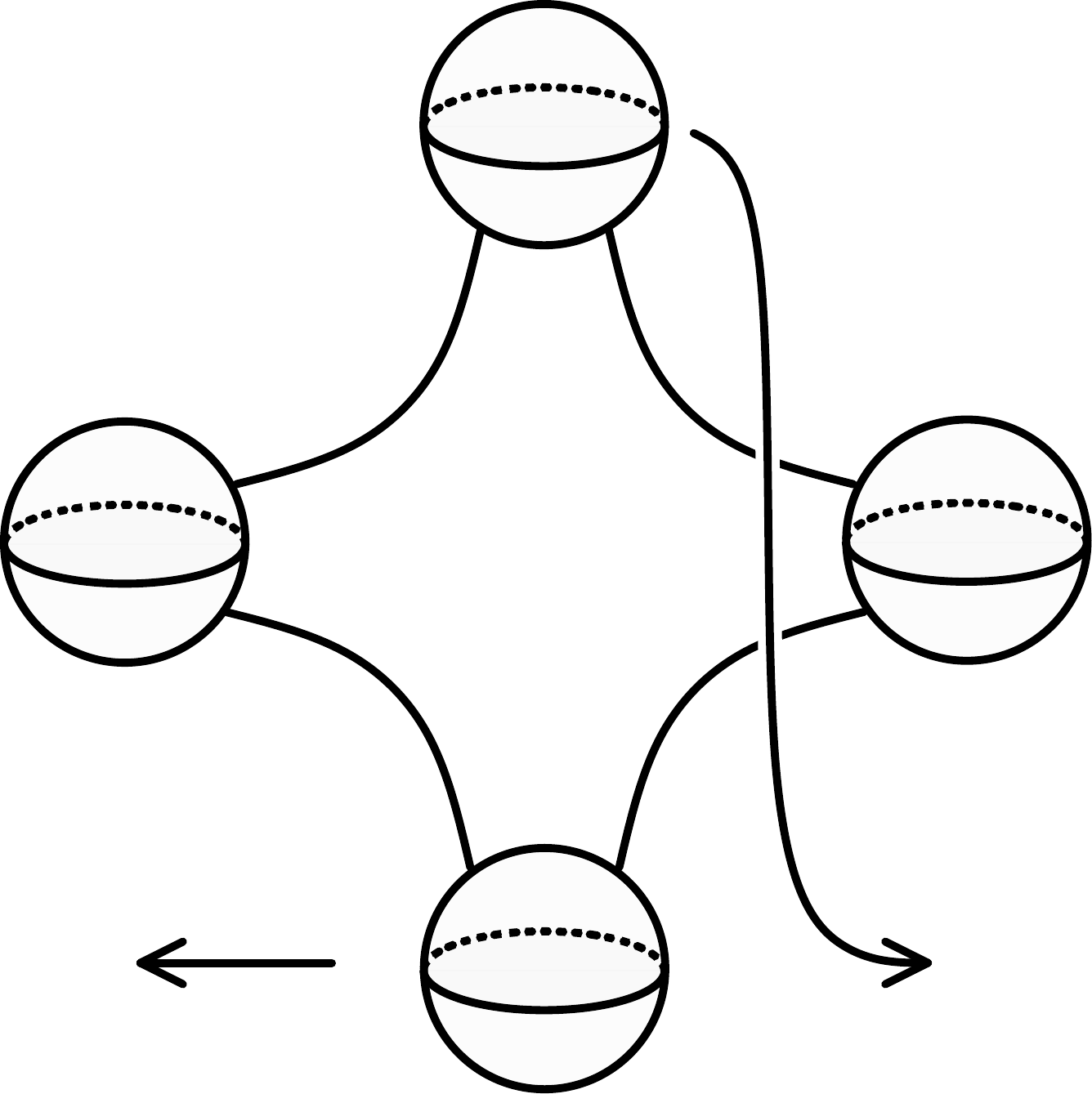}} \hspace{2.5cm}
    \subfigure{\includegraphics[width=0.30\textwidth]{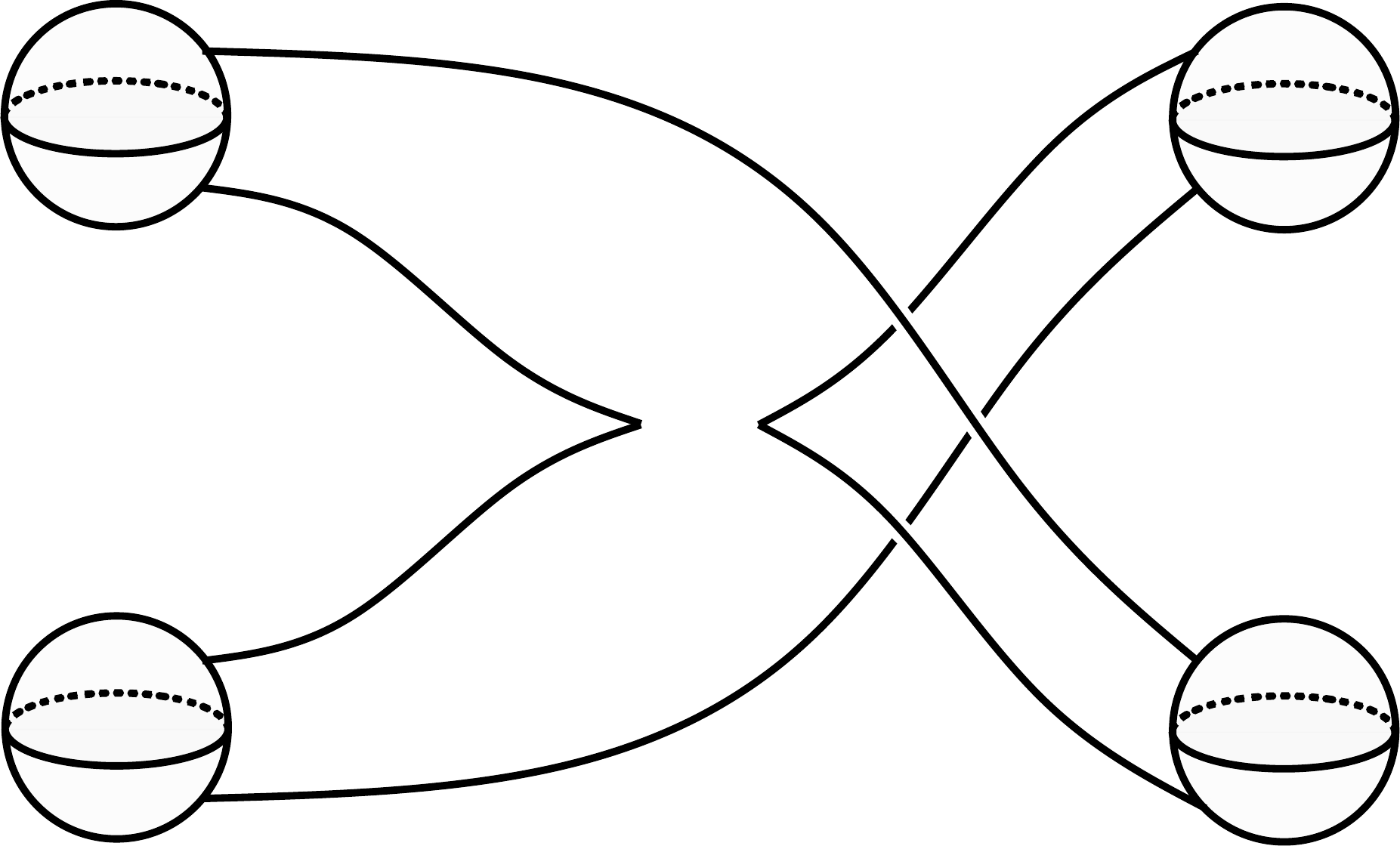}}
        
    \caption{Diagram depicting how to move the usual picture of $T^*T^2$ into standard form after inserting the necessary Legendrian data, i.e. replacing vertical tangencies by cusps.}
    \label{stdTstarT2}
\end{figure}

\begin{figure}[h!]
	\begin{center}
		\includegraphics[width=6cm]{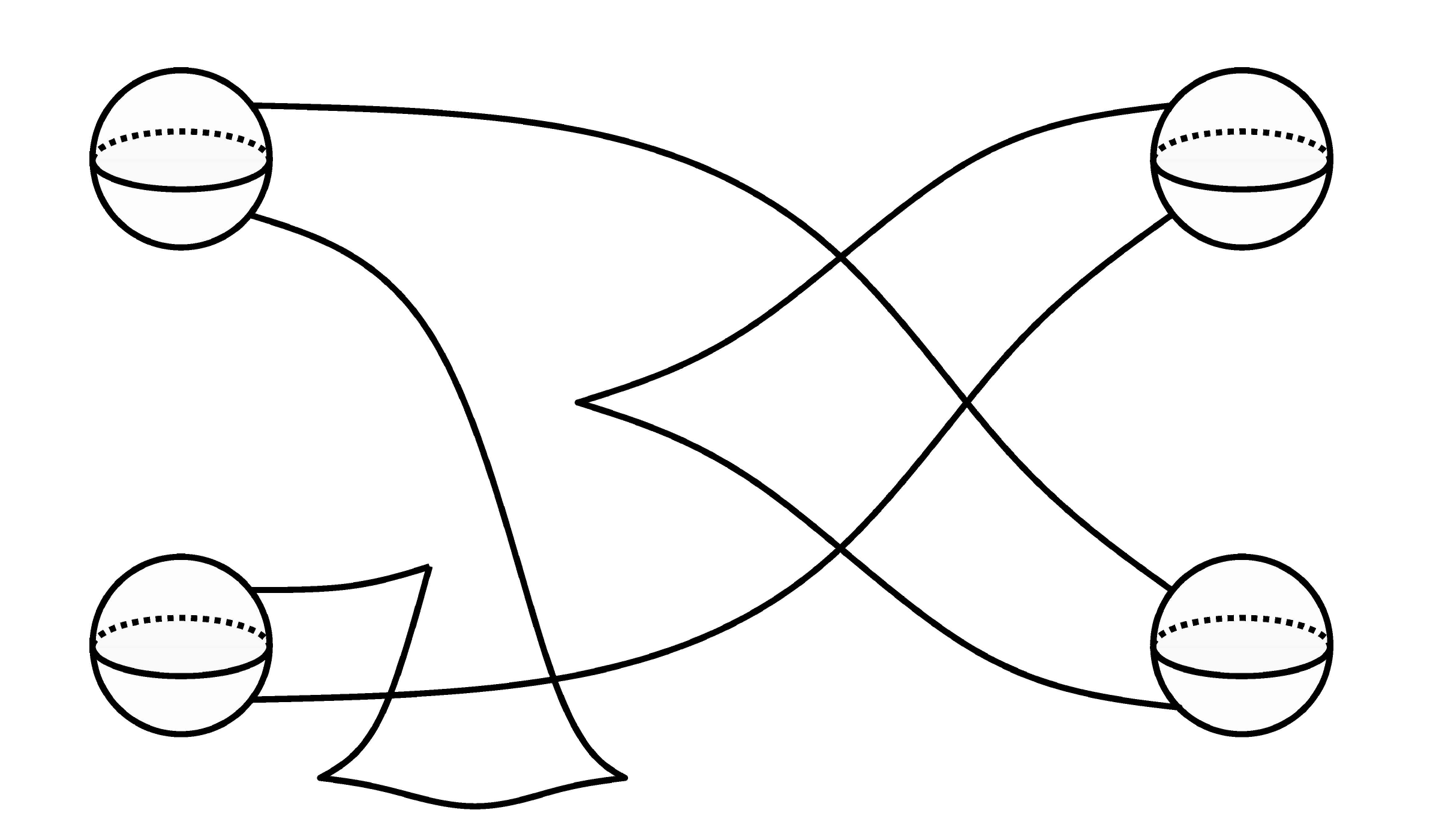}
		\caption{Stein structure on $D^2$ bundle over $T^2$ with framing coefficient $e(T^*T^2) \leq 0$.}
		\label{TstarT2}
	\end{center}
\end{figure}


\section{The local model for our handle attachment} \label{s:handle}

In this section we study the local model for the smoothing of a normal crossing singularity of a symplectic 
divisor in dimension~$4$ and describe the local handle attachment information (core and co-core) for the $2$-handle attachment needed to describe the complement of the smoothing of a node. Here we give the intuitive picture behind the following theorem.

\begin{theorem}[Theorem 4.1~\cite{ACGMMSW}]\label{thm:toricWein}
	Let $(M,\omega)$ be a toric $4$-manifold corresponding to Delzant polytope $\Delta$ which is $\{V_1,\ldots, V_n \}$-centered. Let $D$ denote the divisor obtained by smoothing the toric divisor at the nodes $V_1,\ldots, V_n$. Then there exist arbitrarily small neighborhoods $N$ of $D$ such that $M\setminus N$ admits the structure of a Weinstein domain.
	
	Furthermore, $M\setminus N$ is Weinstein homotopic to the Weinstein domain obtained by attaching Weinstein $2$-handles to the unit disk cotangent bundle of the torus $D^*T^2$, along the Legendrian co-normal lifts of co-oriented curves of slope $s(V_1),\ldots, s(V_n)$. Here $s(V_i)$ is equal to the difference of the inward normal vectors of the edges adjacent to $V_i$ in $\Delta$.
\end{theorem}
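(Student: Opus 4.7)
The plan is to start from the Weinstein structure on the complement of the full (unsmoothed) toric divisor---whose Liouville completion is $T^*T^2$---and to realize the effect of smoothing each chosen node as a Weinstein $2$-handle attachment. Let $N_0$ denote a small neighborhood of the unsmoothed toric divisor, so that $M \sm N_0$ is symplectomorphic to a subdomain of $T^*T^2$ with $\dd N_0$ corresponding to $S^*T^2$. I would then build $M \sm N$ from $M \sm N_0$ by attaching one Weinstein $2$-handle per smoothed node $V_i$, whose Lagrangian core is the disk appearing in the local smoothing model.

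First I would analyze the local model near a single node $V_i$. A Darboux chart around $V_i$ realizes the local toric divisor as $\{xy = 0\} \sse \C^2$, with smoothing $\{xy = \e\}$. In the complement of a tubular neighborhood of the smoothed curve sits a Lagrangian disk, for example $\{x = -\bar y,\; |x| \leq R\}$, whose boundary lies on the torus $\{|x| = |y| = R\}$ as a curve of slope $(1,-1)$. Extending this boundary along the Liouville flow out to $\dd N_0$ and expressing it in $S^*T^2$ coordinates identifies it as the Legendrian co-normal lift of a co-oriented simple closed curve on the base $T^2$. A direct check of the moment-map combinatorics, matching the identification of the $T^2$-orbit above a point near $V_i$ with $\R^2 / \Z^2$ through the primitive edge normals, shows that the slope of this curve in $H_1(T^2)$ is precisely $s(V_i)$.

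The centered hypothesis is what allows the $n$ local models to be glued simultaneously. Because the rays $r_1, \ldots, r_n$ share a common interior point $p$ of $\Delta$, I can choose $N$ so that each local Lagrangian disk has its boundary on the single torus fiber $T^2 \times \{p\} \sse T^2 \times P \cong M \sm N_0$. After small translations in the $T^2$ factor the boundary Legendrians become pairwise disjoint co-normal lifts, of slopes $s(V_1), \ldots, s(V_n)$. Since each core disk is Lagrangian, the contact framing of its boundary automatically matches the $tb-1$ Weinstein framing, and attaching the $n$ handles on top of $D^*T^2$ produces exactly the handlebody in the statement.

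The main obstacle I anticipate is the global patching of the Liouville vector field. One must construct a single Liouville vector field on $M \sm N$ that agrees with the standard field of $T^*T^2$ away from small neighborhoods of the smoothed nodes, agrees with the model Weinstein $2$-handle vector field near each corresponding index-$2$ critical point, and interpolates smoothly in between; in particular, one must verify that each local Lagrangian disk is realized as the stable manifold of a genuine index-$2$ zero of the global Liouville field, which is where the centered condition is really used (it gives enough room to point all the Liouville flows inward toward the common base point $p$ without mutual interference). Once this is arranged, the uniqueness-up-to-Weinstein-homotopy statement follows from the standard flexibility of Weinstein structures, since the resulting critical points, attaching Legendrians, and framings coincide with those of the target handlebody.
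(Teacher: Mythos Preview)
Your proposal is essentially the same approach as the paper's sketch (the paper itself only gives the ``intuitive picture'' in Section~\ref{s:handle} and defers the full proof to \cite{ACGMMSW}). Both identify the same Lagrangian core disk in the local smoothing model---your $\{x=-\bar y,\ |x|\le R\}$ is exactly the paper's $D_2=\operatorname{im}\psi$---compute the attaching slope as $(1,-1)$ in standard coordinates and hence $s(V_i)$ after the $SL(2,\Z)$ change to general $V_i$, and then invoke the centered hypothesis to make the local handle attachments compatible with a single global Liouville structure extending that of $D^*T^2$. The paper additionally writes down the co-core disk $D_1$ explicitly and checks transversality of $D_1$ and $D_2$ at the origin, which you might add; and, like you, it flags the global Liouville patching as the step requiring real work, referring to \cite[Section~4.5]{ACGMMSW} for the construction.
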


A local model in a $4$-dimensional manifold~$M$ for the normal crossing intersection of two 
symplectic divisors can be given by a Darboux chart~$(\C^2, \omega_{\std})$ at the 
intersection point, where the two divisors are mapped to the two axes 
$\Sigma_1 = \left \{{(z_1,z_2) \in \C^2 \, | \, z_2 = 0} \right\}$ and 
$\Sigma_2 = \left \{{(z_1,z_2) \in \C^2 \, | \, z_1 = 0} \right\}$. 
Smoothing this normal crossing means that locally one substitutes the union of these divisors 
by the smooth surface 
$$\Sigma = \left \{{(z_1,z_2) \in \C^2  \, | \,  z_1 \cdot z_2 = \varepsilon^2} \right\}$$ 
for some $\varepsilon > 0$. Topologically, the complement of the smoothed surface differs from the complement of the normal crossing divisors by one 2-handle attachment for each smoothed normal crossing intersection. Under the centered hypothesis (Definition~\ref{def:centered}), Theorem~\ref{thm:toricWein} says that when considering the symplectic and Liouville structures, the difference between the complements also corresponds to a collection of \emph{Weinstein} 2-handle attachments.

In order to encode this handle attachment in the Weinstein Kirby diagram, we need to 
identify the corresponding Legendrian attaching sphere. We find natural Lagrangian representatives of the co-core and core of the handle in the complement of the smoothed divisor. From there, the attaching sphere is the boundary of the core.

A co-core of a Weinstein $2$-handle is characterized (uniquely up to Lagrangian isotopy \cite{EliPol}) as a Lagrangian disk contained in the smooth $2$-handle with unknotted Legendrian boundary lying in the boundary of the handle (not the attaching region). In our model, the boundary of the handle is in the boundary of the small neighborhood of the smoothed hypersurface $\Sigma$. 
One can consider, for $0 < \varepsilon' < \varepsilon$, the disk $D_1$ defined as the image of the map
$$
\begin{array}{ccccc}
\phi & : & [0,1] \times [0,2\pi] & \rightarrow & \C^2\\
&   & (r,\theta)                   & \mapsto    & \left({ \begin{array}{c} 
	r \varepsilon' e^{i \theta}\\ 
	r \varepsilon' e^{-i \theta}
	\end{array}}\right)
\end{array}
$$
The size of $\varepsilon'$ is determined by the size of the neighborhood of the smoothed divisor which is deleted. As $\varepsilon'\to \varepsilon$, the open disk of radius $\varepsilon$ provides the analog of the co-core in the non-compact complement of the divisor itself.

This disk is Lagrangian as for $r \neq 0$, the derivative of $\phi$ 
$$d_{(r,\theta)} \phi = \left({ 
	\begin{array}{cc}
	\varepsilon' e^{i \theta} & i r \varepsilon' e^{i \theta}\\
	\varepsilon' e^{-i \theta} & -i r \varepsilon' e^{-i \theta}
	\end{array}
}\right)
$$
is an isomorphism so that the tangent space to $D_1$  at the point $\phi(r,\theta)$ is spanned 
by the vectors
$$u_1 = \left({ \begin{array}{r} 
	\varepsilon' e^{i \theta}\\ 
	\varepsilon' e^{-i \theta}
	\end{array}}\right)                       
\textrm{ and }
u_2 = \left({ \begin{array}{r} 
	i r \varepsilon' e^{i \theta}\\ 
	-i r \varepsilon' e^{-i \theta}
	\end{array}}\right)$$
and one can check that 
$$\omega_{\std}(u_1, u_2) = \Im m ( \langle u_1 ,  u_2 \rangle )  = 0$$
where $\langle \cdot ,  \cdot \rangle$ is the standard Hermitian product in $\C^2$ 
(equivalently one can check that $\phi^*(\frac{i}{2}(dz_1\wedge d\bar{z}_1+dz_2\wedge d\bar{z}_2))=0$). The point for $r=0$ corresponds to the origin of $\C^2$. At this point, the two following curves $c_1$ 
and $c_2$ in $D_1$ parametrized for $t \in (-1,1)$ by:
$$c_1(t) = \left({ \begin{array}{c} 
	t \varepsilon'\\ 
	t \varepsilon'
	\end{array}}\right)
\textrm{ and }      
c_2(t) = \left({ \begin{array}{c} 
	i t \varepsilon'\\ 
	-i t \varepsilon'
	\end{array}}\right)$$
give the two independent vectors in the tangent space at the origin of the disk~$D_1$:
$$u_1 = \left({ \begin{array}{c} 
	\varepsilon' \\ 
	\varepsilon' 
	\end{array}}\right)                       
\textrm{ and }
u_2 = \left({ \begin{array}{c} 
	i  \varepsilon' \\ 
	-i  \varepsilon'
	\end{array}}\right)$$
One can note that we have again $\omega_{\std}(u_1, u_2)  = 0$.

The boundary of this disk is the image by $\phi$ of $\{1\} \times [0,2\pi],$ that is, the circle 
$$B = \left\{ { \left. {\left({ \begin{array}{c} 
			\varepsilon' e^{i \theta}\\ 
			\varepsilon' e^{-i \theta}
			\end{array} }\right) } \right| \theta \in [0, 2\pi] }\right\}.$$
			
One can choose $\varepsilon'<\varepsilon$ such that the circle $B$ lies on the boundary of the small neighborhood of the smoothed $\Sigma$ (since $z_1 z_2 = \varepsilon' e^{i \theta} \varepsilon' e^{-i \theta} = \varepsilon'^2$ goes to $\varepsilon^2$ if $\varepsilon'$ approaches $\varepsilon$). It limits to the origin when $\varepsilon'$ goes to $0$, so that $B$ is the belt sphere of the handle and $D_1$ is indeed its co-core.\smallskip

The core is characterized (uniquely up to Lagrangian isotopy) as a Lagrangian disk with unknotted boundary in the smooth $2$-handle which intersects the co-core transversally at one point and which avoids the boundary of the Weinstein manifold, so in our model, it should avoid the smoothed~$\Sigma$.

Let $D_2$ be the disk defined as the image of the map
$$
\begin{array}{ccccc}
\psi & : & [0,1] \times [0,2\pi] & \rightarrow & \C^2\\
&   & (r,\theta)                   & \mapsto    & \left({ \begin{array}{c} 
	r \varepsilon e^{i \theta}\\ 
	r \varepsilon e^{-i (\theta+\pi)}
	\end{array}}\right)
=  \left({ \begin{array}{c} 
	r \varepsilon e^{i \theta}\\ 
	-r \varepsilon e^{-i \theta}
	\end{array}}\right)
\end{array}
$$

This disk is also Lagrangian as, 
similarly as before, 
the tangent space to $D_2$  at the point $\psi(r,\theta)$ for $r \neq 0$ is spanned 
by the vectors
$$u_3 = \left({ \begin{array}{r} 
	\varepsilon e^{i \theta}\\ 
	-\varepsilon e^{-i \theta}
	\end{array}}\right)                       
\textrm{ and }
u_4 = \left({ \begin{array}{r} 
	i r \varepsilon e^{i \theta}\\ 
	i r \varepsilon e^{-i \theta}
	\end{array}}\right)$$
and one can check that $$\omega_{\std}(u_3, u_4) = \Im m ( \langle u_3 ,  u_4 \rangle )  = 0.$$

Similarly, at the origin, the two following curves $c_3$ and $c_4$ in $D_2$ parametrized for $t \in (-1,1)$ by
$$c_3(t) = \left({ \begin{array}{c} 
	t \varepsilon\\ 
	-t \varepsilon
	\end{array}}\right)
\textrm{ and }      
c_4(t) = \left({ \begin{array}{c} 
	i t \varepsilon\\ 
	i t \varepsilon
	\end{array}}\right)$$
give the two independent vectors in the tangent space at the origin of the disk~$D_2$:
$$u_3 = \left({ \begin{array}{c} 
	\varepsilon \\ 
	-\varepsilon 
	\end{array}}\right)                       
\textrm{ and }
u_4 = \left({ \begin{array}{c} 
	i  \varepsilon \\ 
	i  \varepsilon
	\end{array}}\right)$$
Note again that $\omega_{\std}(u_3, u_4)  = 0$. This disk does not intersect the smoothed  $\Sigma$ as
$$z_1 z_2 = r \varepsilon e^{i \theta} (- r \varepsilon e^{-i \theta}) = - r^2 \varepsilon^2 \neq \varepsilon^2.$$ Moreover, $D_1$ and $D_2$ intersect at the origin and this intersection is transverse as one can check that the family $(u_1, u_2, u_3, u_4)$ spans $\C^2$ as a real vector space. This shows that $D_2$ is the core of the handle and the attaching sphere is  the image by $\psi$ of $\{1\} \times [0,2\pi]$, that is, $$A = \left\{ { \left. {\left({ \begin{array}{c} 
			\varepsilon e^{i \theta}\\ 
			\varepsilon e^{-i (\theta+\pi)}
			\end{array} }\right) } \right| \theta \in [0, 2\pi] }\right\}.$$

Identifying these transverse Lagrangian disks with the core and co-core in Weinstein's model for a $4$-dimensional $2$-handle, allows us to place a Weinstein structure (a Liouville vector field and corresponding Morse function) on this new piece which lies in the complement of the regular neighborhood of the smoothing. If the polytope is centered with respect to all the nodes we are smoothing, the Weinstein structures on these pieces glue together consistently with a Weinstein structure on the complement of the unsmoothed toric divisor (a Weinstein domain which completes to $T^*T^2$). It is shown in~\cite[Section 4.5]{ACGMMSW} how these fit together to give an explicit global Weinstein structure.			
			
In the toric description of the toric manifolds and divisors we consider, the Hamiltonian torus action in the local Darboux model corresponds to the torus action on $\C^2$ given in coordinates: $(e^{i\theta_1}, e^{i\theta_2}) \ast (z_1,z_2)=(e^{i\theta_1}z_1,e^{i\theta_2}z_2)$. In particular, through the symplectomorphism between the complement of the normal crossing divisor we consider and~$T^*T^2$, the orbit of a point corresponds to the torus 
$T^2$ and the quotient space under the Hamiltonian action corresponds to a cotangent 
fiber. The attaching sphere in the model corresponds in this symplectic identification 
to a lift of a circle of slope~$(1, -1)$ in the base~$T^2$ to the 
cotangent bundle~$T^*T^2$ (lift corresponding to the 
point~$(\varepsilon^2, \varepsilon^2)$ in the quotient space $(\R_{>0})^2$). 
The standard model neighborhood corresponds to the standard cone $\R_{\geq 0}\times \R_{\geq 0}$, and the corresponding attaching slope is $(1,-1)=(1,0)-(0,1)$, the difference of the inward normal vectors.
In general, relating any node to the standard model via an $SL(2,\Z)$ transformation, the attaching sphere for the $2$-handle corresponding to the smoothing of a normal crossing singularity at a chosen vertex $V$ will be the co-normal lift of a circle of slope $s(V)$, where $s(V)$ is the difference of the inward normal vectors of the edges adjacent to $V$, 
(see~\cite[Lemma 4.2]{ACGMMSW}).


\section{The algorithm through an example}

Here we will show how to obtain a Weinstein handle diagram for the complement of a toric divisor with exactly one node smoothed. Note that for any Delzant polytope, and any single vertex $V$, the polytope is vacuously $\{V\}$-centered. To fix a specific example, we could start with the toric $4$-manifold $\cptwo$ whose toric divisor is a collection of three $\cpone$'s intersecting at three points, and smooth this divisor at one of the intersection points. More explicitly in homogeneous coordinates $[z_0:z_1:z_2]$ on $\cptwo$, the toric divisor is given by the union of the three lines $L_0=\{z_0=0\}$, $L_1=\{z_1=0\}$ and $L_2=\{z_2=0\}$. Let us smooth the intersection of $L_1$ with $L_2$ at $[1:0:0]$. In the affine coordinate chart where $z_0=1$, with coordinates $(z_1,z_2)$, this aligns exactly with our local model in Section \ref{s:handle}. After smoothing, the lines $L_1$ and $L_2$ are joined to form a conic $Q$, which intersects the remaining line $L_0$ at two points. By Theorem~\ref{thm:toricWein}, the complement of this smoothed divisor is obtained by attaching a single $2$-handle to $D^*T^2$ along the Legendrian lift of a curve in $T^2$ of slope $(1,-1)$.

	\begin{figure}[!ht]
		\centering
		\includegraphics[scale=.3]{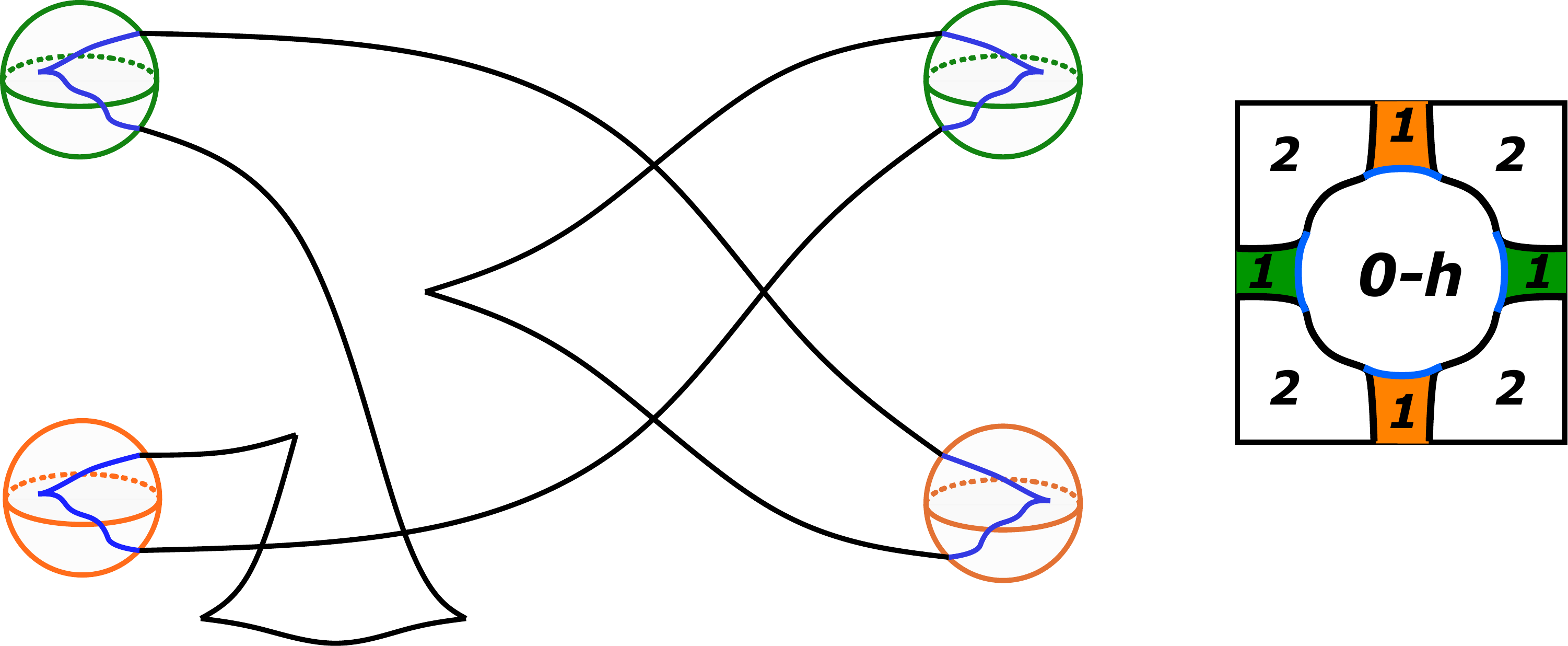}
		\caption{Left: The Legendrian unknot $\mathcal{K}$ in the boundary of the $4$-dimensional $0$-handle of the Gompf diagram, indicating the intersection of this boundary with the Lagrangian torus ($0$-section of $D^*T^2$). The black portions coincide with segments of the attaching circle of the $2$-handle, and the blue portions give the attaching arcs of the $2$-dimensional $1$-handles of the torus. Right: The corresponding decomposition of the Lagrangian torus.}
		\label{fig:torusdecomp}
	\end{figure}
	
	In order to translate Legendrian attaching circles in $S^*T^2$ described as the co-normal lift of a curve in $T^2$ into Legendrian curves drawn in the Gompf diagram (Figure \ref{TstarT2}), we need to understand how these two pictures get identified. As mentioned in Section \ref{s:kirby}, the Gompf handle diagram is obtained by starting with a smooth handle decomposition of $T^2$ with a single $0$-handle, two $1$-handles, and one $2$-handle. This diagram is thickened by two dimensions to obtain $T^2\times D^2$, and then the attaching curve of the $2$-handle is isotoped around until it agrees with a Legendrian front diagram with induced framing $tb-1 = 0$. On the other hand, the co-normal lift construction is more compatible with the canonical (Morse-Bott) Weinstein structure on $D^*T^2$ which has critical locus along the $0$-section. In \cite[Theorem 7.1]{ACGMMSW}, we prove that these two structures are Weinstein homotopic and identify the image of the Lagrangian torus giving the zero-section of $D^*T^2$ in the Gompf diagram. The handle decomposition on the $4$-manifold induces the corresponding handle decomposition on the Lagrangian torus by intersection. In particular, we see a Legendrian (un)knot $\mathcal{K}$ in the boundary of the $4$-dimensional $0$-handle, which partially coincides with the attaching sphere of the $2$-handle, and partially corresponds with attaching arcs for the $1$-handles of the Legendrian torus. See Figure \ref{fig:torusdecomp}.

\begin{figure}[!ht]
	\begin{center}
		\includegraphics[width=8cm]{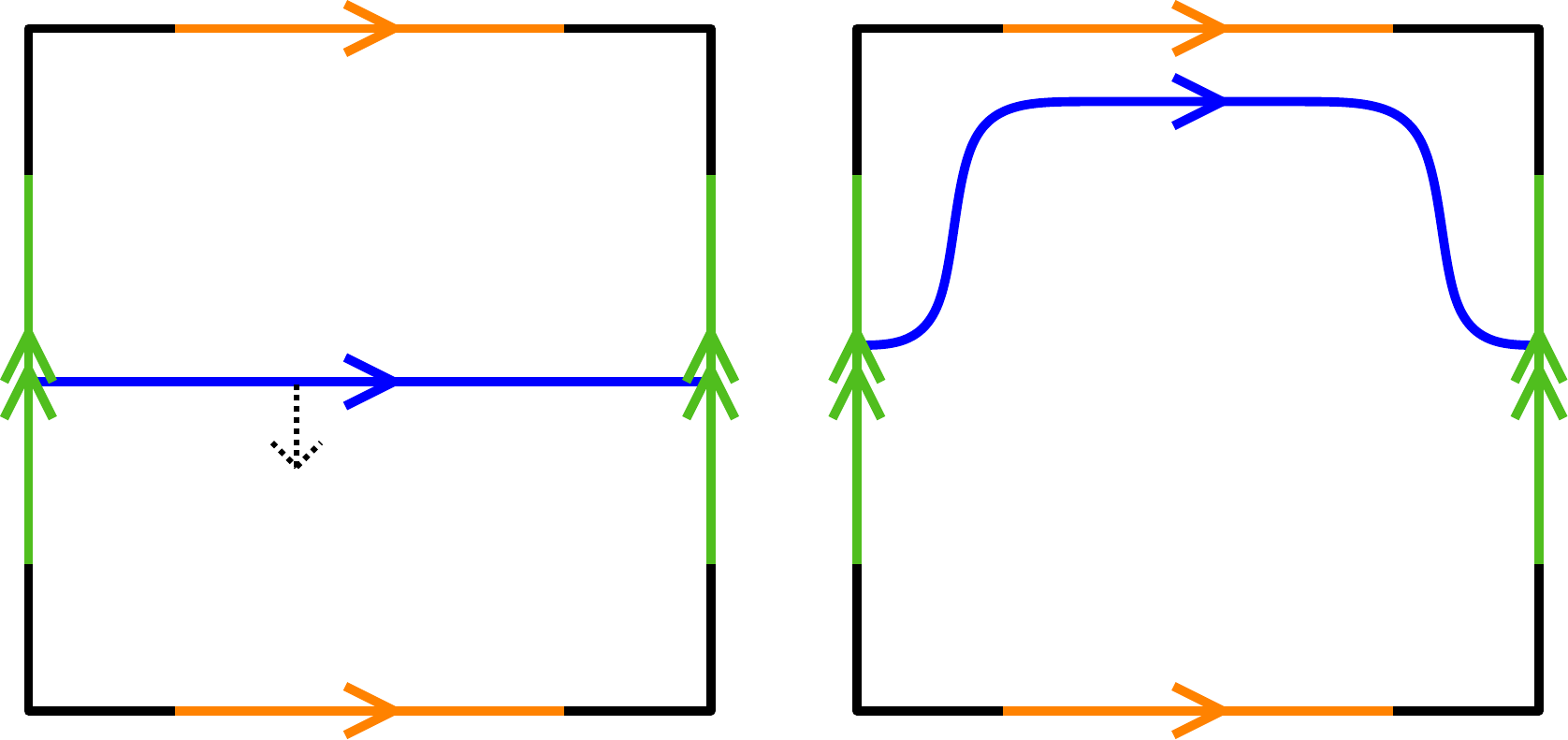}
		\caption{The circle with $(1,0)$-slope on $T^2$ with the dotted arrow indicating the co-orientation of the curve, and an isotoped version which runs parallel to the boundary of the $0$-handle of $T^2$ except where it passes through a $1$-handle.}
		\label{onenodetorus}
	\end{center}
\end{figure}

\begin{figure}[!ht]
	\begin{center}
		\includegraphics[width=10 cm]{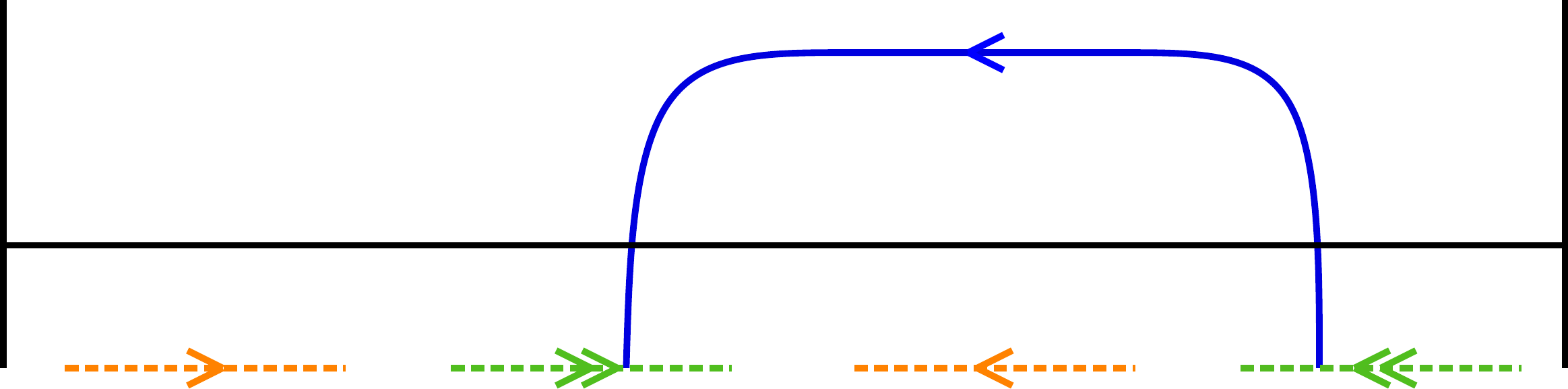}
		\caption{The curve in $J^1(S^1)$ which is identified with the curve $(1,0)$ on $T^2$ as in Figure \ref{onenodetorus}.}
		\label{onenodejet}
	\end{center}
\end{figure}

\begin{figure}[!ht] 
    \centering
    \subfigure[The neighborhood $U$ of the co-normal lift of a curve $\gamma \subset T^2$]{\includegraphics[width=0.17\textwidth]{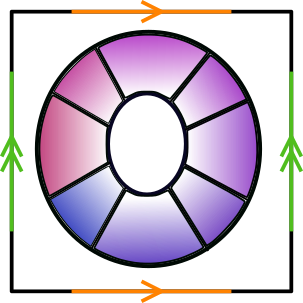}}  \hspace{2cm}
    \subfigure[Identification of $U$ with neighborhood of the zero section of $J^1(S^1)$.]{\includegraphics[width=0.32\textwidth]{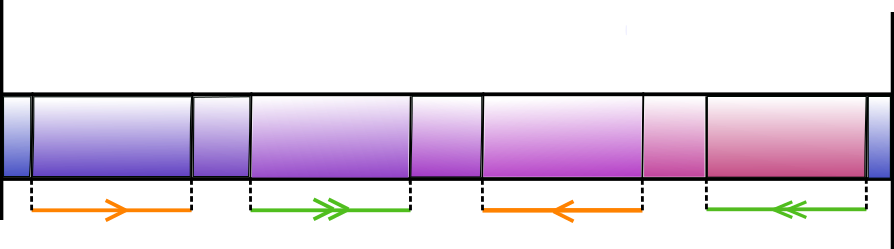}}  
\vspace{1cm}
    \subfigure[The front projection of $U$ in the Gompf diagram for $D^*T^2$]{\includegraphics[width=0.43\textwidth]{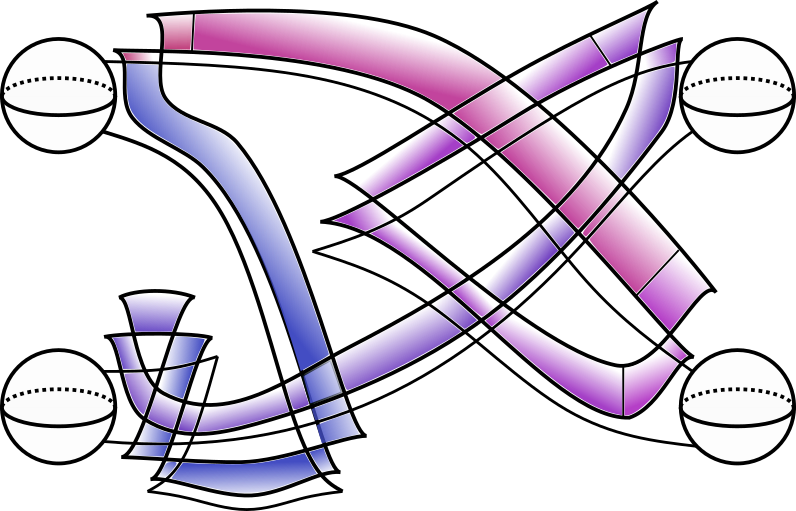}}
\caption{Transferring curves from $T^2$ to $J^1(S^1)$ and then to the Gompf diagram.}
\label{fig:gradient} 
\end{figure}

	Now consider the Legendrian co-normal lift to $S^*T^2$ of the circle in $T^2$ which is the boundary of the $2$-dimensional $0$-handle, with the inward co-orientation. This is a Legendrian push-off of $\mathcal{K}$ in $\partial B^4 = \partial D^*D^2$, because a small positive Reeb flow applied to $\mathcal{K}=\partial D^2$ yields the co-normal lift of a concentric circle close to $\partial D^2$. We will perform an isotopy to the curves in our torus corresponding to attaching spheres of the additional $2$-handles so that these curves agree with parallel copies of such circles except where they enter the $1$-handles. Circles which are further inward will be pushed off more in the \emph{positive} Reeb direction. Note that every Legendrian circle has a standard neighborhood which is contactomorphic to a neighborhood of the zero section in $J^1(S^1)$ with the contact form $dz-ydx$ where $x$ is the coordinate on $S^1$. We will translate the diagram on the torus to a front projection diagram of $J^1(S^1)$, where we think of the $S^1$ as the Legendrian boundary of the $0$-handle of the $T^2$ $0$-section. Since the Reeb direction is the positive $\partial_z$ direction in $J^1(S^1)$, circles which are further inwards in the torus (pushed further by the Reeb flow) will correspond to curves which are pushed upwards more in the $J^1(S^1)$ diagram. See an example of this procedure in Figures \ref{onenodetorus} and \ref{onenodejet}. Finally, once we have our diagram in the 1-jet space, we can satellite the diagram onto the image of $S^1$ in the Gompf diagram. The images of the co-normal lifts of curves contained in a neighborhood $U$ of the curve $\gamma$ that coincides with the attaching circle of the $2$-handle and the attaching arcs of the $2$ dimensional $1$ handles of the torus are illustrated in Figure \ref{fig:gradient}. As indicated by the shading gradient, curves which lie further inward towards the center of the square correspond to curves at greater $z$-height values in the jet-space. In turn, curves at a higher $z$-height in the jet space will correspond to higher Reeb pushoffs of the complicated looking Legendrian unknot of Figure~\ref{fig:gradient}(c).

	Initially, let us apply this procedure in the simple example where we are attaching a $2$-handle along the co-normal lift of a circle in the torus with slope $(1,0)$. The cotangent projection of this model is presented on the left in Figure \ref{onenodetorus}. Push the curve $(1,0)$ to the upper side of the square, so it lies close to the boundary, and then cut the rectangle at the bottom left vertex to map to $J^1(S^1)$, obtaining Figure \ref{onenodejet}. Satelliting this onto the Legendrian unknot in Figure \ref{fig:torusdecomp}, we obtain Figure \ref{onenode}.\smallskip
	
\begin{figure}[!ht]
	\begin{center}
		\includegraphics[width=6cm]{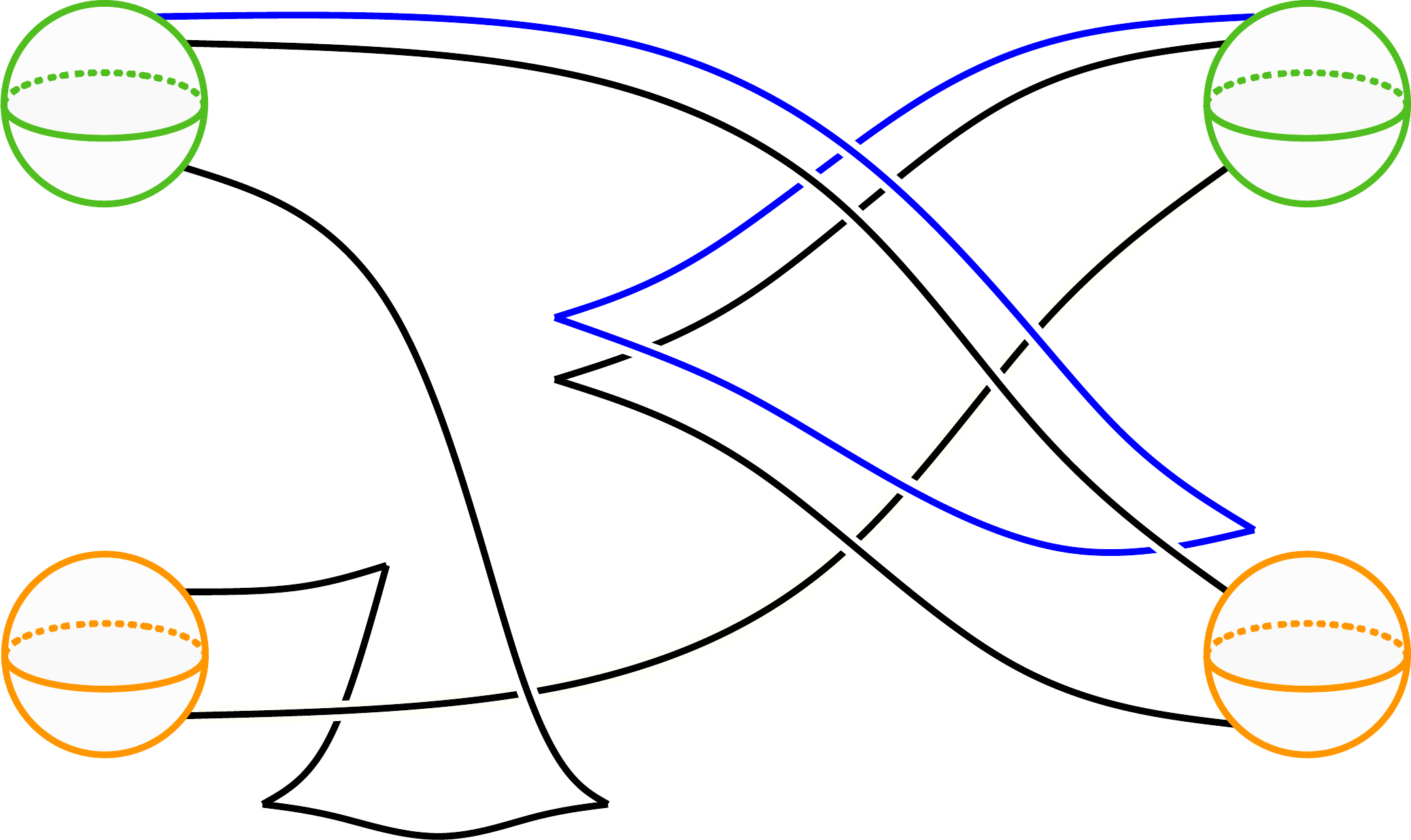}
		\caption{The Legendrian handle diagram of the complement of the toric divisor smoothed in one node. That is, $T^*T^2 \cup \Lambda_{(1,0)}$.}
		\label{onenode}
	\end{center}
\end{figure}

	\begin{figure}[!ht]
		\begin{center}
			\includegraphics[width=8cm]{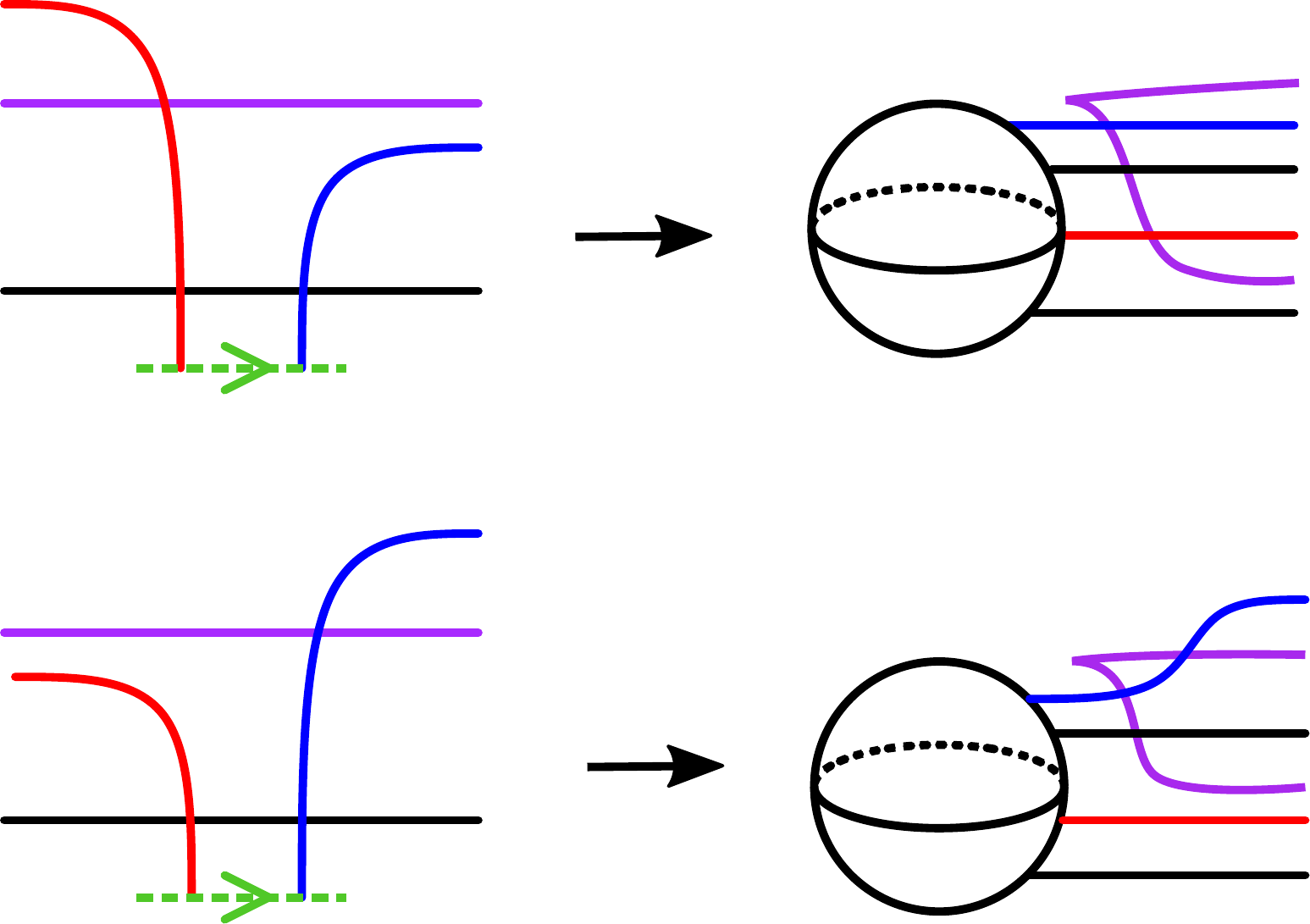}
			\caption{Mapping the red, blue and purple curves from $J^1(S^1)$ to $T^*T^2$. The relative heights of these curves with respect to the black curve are preserved. The purple curve follows a Reeb push off of the cusping curve in the 1-handle's attaching region (see Figure \ref{fig:torusdecomp}) while the red and blue curves pass through the 1-handle.}
			\label{curvecorner}
		\end{center}
	\end{figure}

	In general, when satelliting, we need to be somewhat careful with the behavior of the curves near the $1$-handle. If the curves pass above the attaching region of a $1$-handle without entering the $1$-handle, they will follow an upward Reeb push-off of the cusps that appear inside the $1$-handle attaching balls in Figure \ref{fig:torusdecomp}. Note that we will typically push these cusps out of the attaching regions of the $1$-handles by a Legendrian isotopy. If the curves pass through the $1$-handle in the torus, they will pass through the corresponding $1$-handle in the $4$-dimensional handlebody. See Figure \ref{curvecorner} for the conventions in a more complicated example.\smallskip

\begin{figure}[!ht]
	\begin{center}
		\includegraphics[width=14cm]{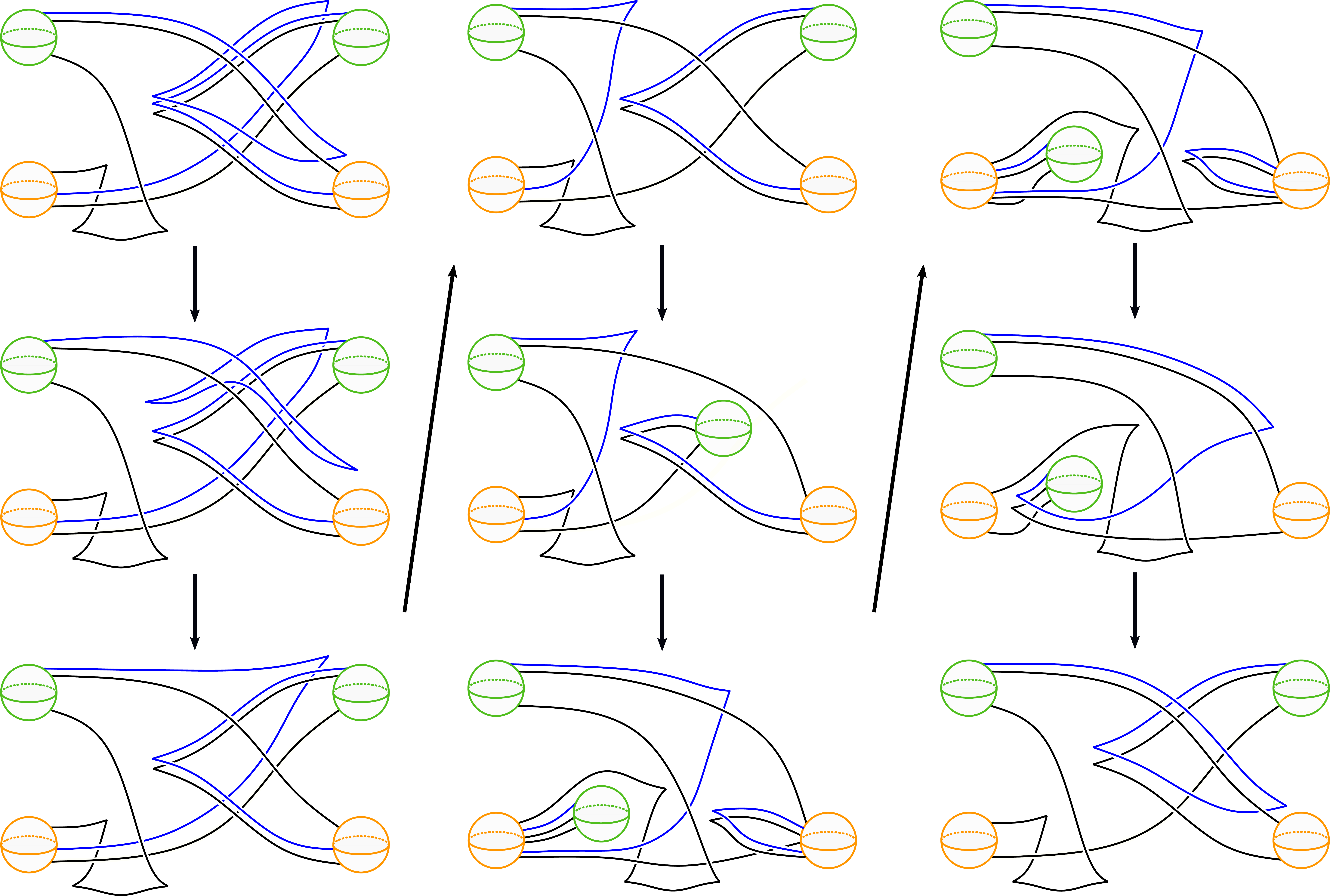}
		\caption{In columns from left to right, a series of Reidemeister and Gompf moves and $1$-handle slides that take the Legendrian lift of a $(1,-1)$ curve to the Legendrian lift of a $(1,0)$ curve.  $(1)$ Reidemeister III, $(2)$ Reidemeister II and I, $(3)$ Reidemeister II, $(4)$ Reidemeister II, $(5)$ slide green $1$-handle over orange $1$-handle, $(6)$ Gompf move $5$, $(7)$ Gompf move $4$, and $(8)$ Reidemeister II.}
		\label{onenodeslide}
	\end{center}
\end{figure}
	
\begin{figure}
\begin{overpic}[scale=.25]{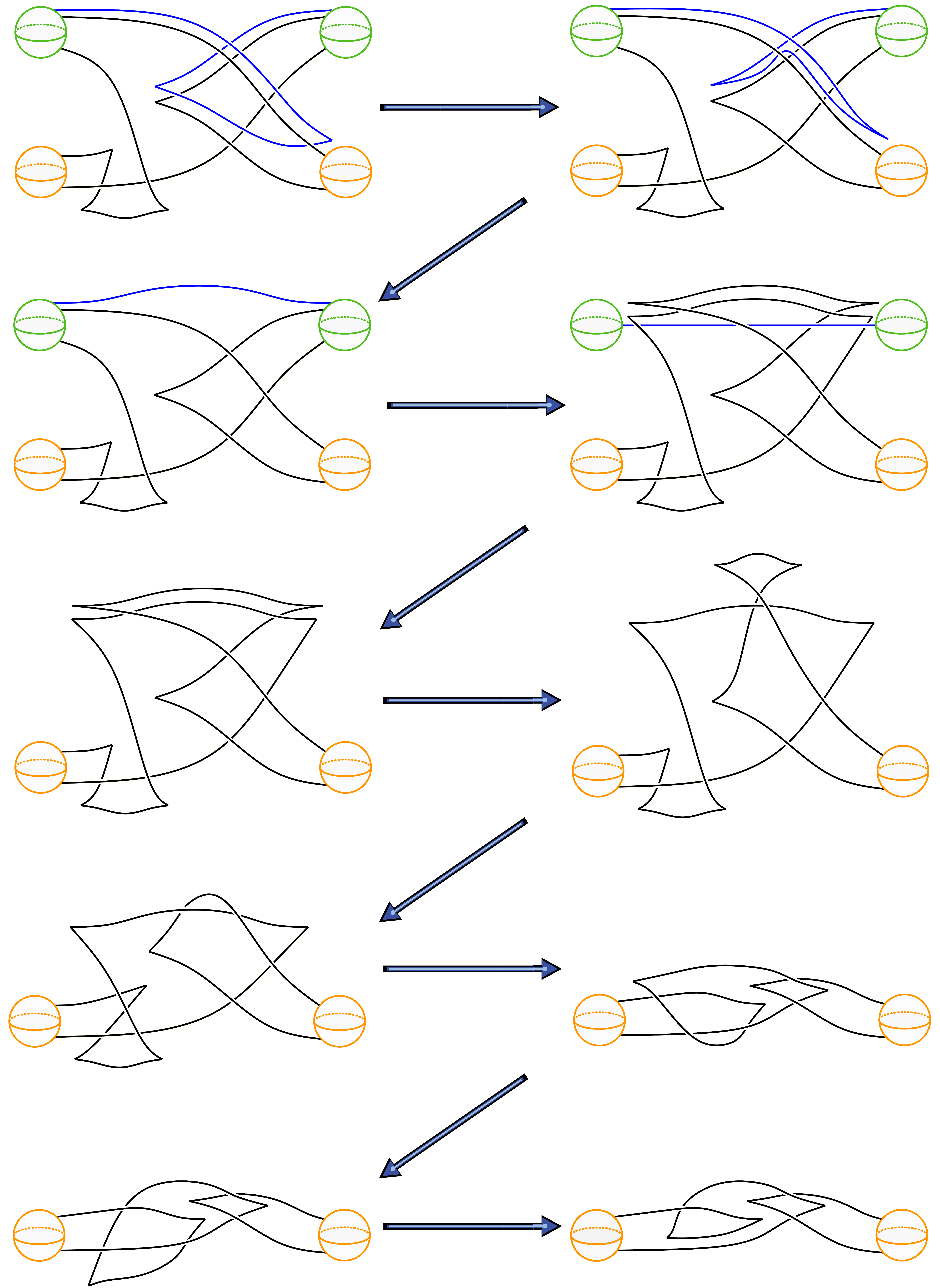}
\put(35, 93){$(1)$}
\put(35, 83){$(2)$}
\put(35, 70){$(3)$}
\put(35, 57){$(4)$}
\put(35, 47){$(5)$}
\put(35, 35){$(6)$}
\put(35, 26){$(7)$}
\put(35, 15){$(8)$}
\put(35, 6){$(9)$}
\end{overpic}
\caption{A series of Reidemeister moves and handle slides simplifying the Weinstein handle diagram of the complement of the toric divisor smoothed in one node. $(1)$ Reidemeister III, $(2)$ Reidemeister II and I, $(3)$ $2$-handle slide, $(4)$ Handle cancellation, $(5)$ Reidemeister III, $(6)$ Reidemeister I and II, $(7)$ Reidemeister III, I and II, $(8)$ Gompf move 6, $(9)$ Reidemeister II.}
\label{onenodeseries}
\end{figure} 

	Although our initial explicit example asked us to attach a handle along a curve of slope $(1,-1)$, we can see that in fact the resulting Weinstein manifold is equivalent to using the $(1,0)$ slope. Figure~\ref{onenodeslide} shows the series of $1$ handle slides, Reidemeister and Gompf moves that take the Legendrian lift of the resulting diagram obtained by attaching along a $(1,-1)$ curve to the diagram corresponding to attaching along a $(1,0)$ curve. \smallskip

\begin{figure}[!ht]
	\begin{center}
		\includegraphics[width=8 cm]{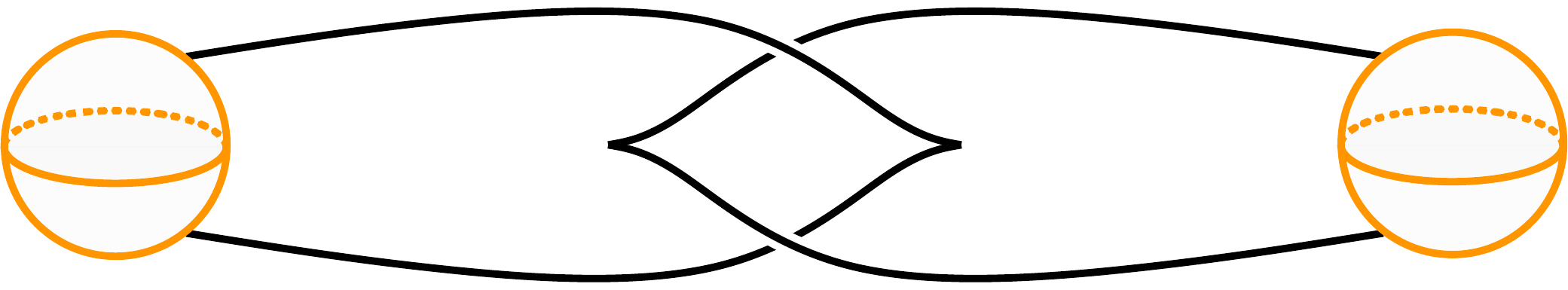}
		\caption{The diagram of the complement of the toric divisor smoothed in one node after simplifications, in particular after applying a single Reidemeister I to the last diagram in Figure \ref{onenodeseries}.}
		\label{onenodefinal}
	\end{center}
\end{figure}

\begin{prop}
	For any toric $4$-manifold, let $\Sigma$ be the result of smoothing the toric divisor at a single node. Then the complement of a small regular neighborhood of $\Sigma$ is a Weinstein domain which is Weinstein homotopic to the domain represented by the handle diagram of Figure~\ref{onenode}.
\end{prop}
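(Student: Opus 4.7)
The plan is to reduce the statement to the case of attaching along the Legendrian co-normal lift of a curve of slope $(1,0)$, and then to verify that the diagram of Figure~\ref{onenode} really is obtained from the Gompf diagram of $D^*T^2$ by satelliting along a Legendrian lift of such a curve. First, I would invoke Theorem~\ref{thm:toricWein}: since any Delzant polytope is vacuously $\{V\}$-centered for any single vertex $V$, that theorem asserts that the complement $M\setminus N$ of a small neighborhood of the once-smoothed divisor is Weinstein homotopic to $D^*T^2$ with a single Weinstein $2$-handle attached along the Legendrian co-normal lift of a co-oriented curve of slope $s(V)$.

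Next I would argue that the Weinstein homotopy type of this attachment is independent of which primitive slope $s(V)\in\Z^2$ arises. Because the adjacent edges at $V$ form a $\Z$-basis of $\Z^2$ (by the Delzant condition), their difference $s(V)$ is primitive, and there exists $A\in SL(2,\Z)$ with $A\cdot s(V)=(1,0)$. The induced diffeomorphism of $T^2$ lifts to a symplectomorphism of $D^*T^2$ preserving the canonical Weinstein structure, and sends the Legendrian co-normal lift of a curve of slope $s(V)$ to the Legendrian co-normal lift of the $(1,0)$-curve. Hence the two $2$-handle attachments produce Weinstein homotopic domains, and it suffices to identify the $(1,0)$ case with Figure~\ref{onenode}.

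For the identification itself, I would follow the three-step procedure described in the text: (i) isotope the $(1,0)$-curve in $T^2$ so that it runs parallel to $\partial(\text{0-handle})$ of the chosen handle decomposition of $T^2$ except where it passes through a $1$-handle of $T^2$, as shown on the right of Figure~\ref{onenodetorus}; (ii) cut along a vertex of the square to transport the curve into $J^1(S^1)$, where the Reeb-height encodes the co-orientation, yielding Figure~\ref{onenodejet}; (iii) satellite this $J^1(S^1)$-diagram onto the Legendrian unknot $\mathcal{K}\subset\partial B^4$ described in Figure~\ref{fig:torusdecomp}, following the translation rules of Figure~\ref{fig:gradient} and the corner convention of Figure~\ref{curvecorner}. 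The resulting Legendrian in the Gompf diagram for $D^*T^2$ is exactly the $2$-handle component of Figure~\ref{onenode}. As a consistency check on the whole pipeline, Figure~\ref{onenodeslide} exhibits an explicit sequence of Reidemeister moves, Gompf moves, and $1$-handle slides that converts the diagram produced by the satelliting procedure for slope $(1,-1)$ into the one produced for slope $(1,0)$, reflecting the $SL(2,\Z)$-equivalence at the level of Weinstein Kirby calculus.

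The step I expect to be the main obstacle is step (iii): justifying rigorously that the satelliting procedure genuinely models a Legendrian co-normal lift in $S^*T^2$, rather than merely being a plausible pictorial recipe. This requires the identification, established in \cite[Theorem 7.1]{ACGMMSW}, of the Lagrangian zero-section inside the Gompf diagram, together with a careful compatibility check between the canonical (Morse-Bott) Weinstein structure on $D^*T^2$ and the Gompf structure along the Reeb direction. Once that identification is in hand, the remaining work is purely local and diagrammatic.
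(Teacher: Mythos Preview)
Your proposal is correct and reaches the same conclusion as the paper, but the key reduction step differs. The paper reduces an arbitrary primitive slope $(a,b)$ to $(1,0)$ \emph{diagrammatically}: it observes that a $1$-handle slide (as in Figure~\ref{onenodeslide}) changes $\Lambda_{(a,b)}$ to $\Lambda_{(a,b\pm a)}$ or $\Lambda_{(a\pm b,b)}$, and then runs the Euclidean algorithm through a sequence of such slides until it reaches $(1,0)$. Your reduction instead uses the $SL(2,\Z)$ action on $T^2$, lifted to an exact symplectomorphism of $D^*T^2$ preserving the canonical Weinstein structure, to send the co-normal lift of the $s(V)$-curve directly to that of the $(1,0)$-curve. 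Both are valid; your argument is more conceptual and avoids the case-by-case Kirby calculus, while the paper's argument has the advantage of staying entirely within the diagrammatic framework set up in Section~\ref{s:kirby} (and indeed the paper mentions the toric/symplectomorphism route only as an alternative, citing \cite[Proposition~5.5]{ACGMMSW}). You also make explicit the verification that Figure~\ref{onenode} is the output of the satelliting procedure for slope $(1,0)$, which the paper's proof leaves implicit from the surrounding discussion; your caution about step~(iii) relying on \cite[Theorem~7.1]{ACGMMSW} is well placed and matches how the paper handles that dependence.
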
 	
\begin{proof}
First note that for a single node, the centered hypothesis is automatically satisfied. By Theorem~\ref{thm:toricWein} such a complement is a Weinstein domain obtained by attaching a single 2-handle to $D^*T^2$ along the Legendrian curve given by a co-oriented co-normal lift of a curve of slope $(a,b)$ in the torus. We verify that the resulting Weinstein domain does not depend on the choice of the slope up to Weinstein homotopy. We can see that $T^*T^2 \cup \Lambda_{(a,b)}$ is Weinstein homotopic to $T^*T^2 \cup \Lambda_{(1,0)}$, by performing 1-handle slides on $T^*T^2 \cup \Lambda_{(a,b)}$ similar to Figure \ref{onenodeslide}, to take a $\Lambda_{(a,b)}$ curve to $\Lambda_{(a,b \pm a)}$ or to $\Lambda_{(a\pm b,b)}$. Using the Euclidean algorithm, with an appropriate choice of 1-handle slides, one can start with $T^*T^2 \cup \Lambda_{(a,b)}$, for any pair $a,b\in \Z$ that are relatively prime, and end with $T^*T^2\cup \Lambda_{(1,0)}$. That the Weinstein domains are symplectomorphic can also be proved using toric arguments (see \cite[Proposition 5.5]{ACGMMSW}).
\end{proof}

The diagram where we attach along the $(1,0)$ curve is preferable to the one where we attach along slope $(1,-1)$ (or a more complicated $(a,b)$ curve) because it is easier to simplify. Starting with Figure \ref{onenode}, we can perform Reidemeister moves, Gompf moves, handle cancellations and handle slides. We choose such a simplifying sequence in Figure \ref{onenodeseries} to obtain the diagram illustrated in Figure \ref{onenodefinal}.

\vskip.1cm


\section{A more complicated example: smoothing a toric divisor in $\C\P^2 \# 3\cptwobar$}

Consider $\cptwo$ blown up three times with the same size of the blow ups. The corresponding Delzant polytope is a hexagon illustrated in Figure \ref{hextor}. Observe that the sizes of the blow-ups have been chosen precisely to make the polytope centered with respect to all of its vertices. The inward normals of the six corners are: $(1,0),( 0,-1),(-1,-1),(-1,0),(0,1), (1,1)$. By Theorem~\ref{thm:toricWein}, if we smooth all six singularities that map under the moment map to the vertices of the hexagon polytope, the complement of the smoothed divisor is $T^*T^2$ with $2$-handles attached along 
 $$\Lambda_{(1,0)},\Lambda_{(0,-1)}, \Lambda_{(-1,-1)},\Lambda_{(-1,0)}, \Lambda_{(0,1)},\Lambda_{(1,1)}$$
as shown in Figure \ref{hex}. 

\begin{figure}[!ht]
  \centering
  \includegraphics[width=6cm]{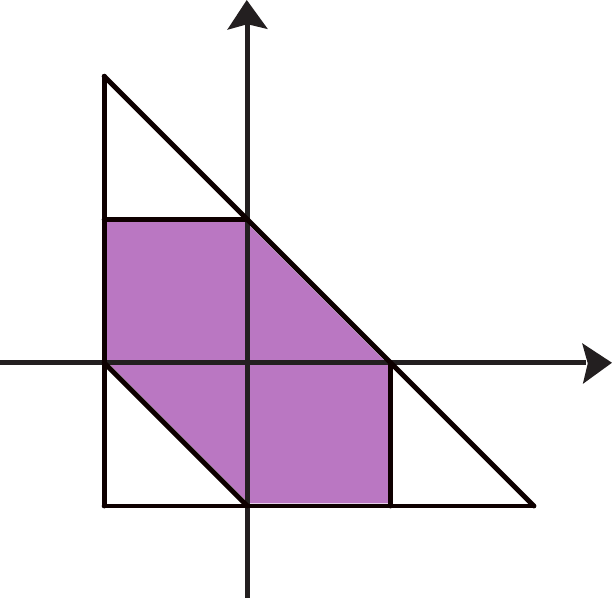}
  \caption{The Delzant polytope of  monotone $\cptwo \# 3\cptwobar$.}
  \label{hextor}
\end{figure}

\begin{figure}[ht]
	\begin{center}
		\includegraphics[width=10cm]{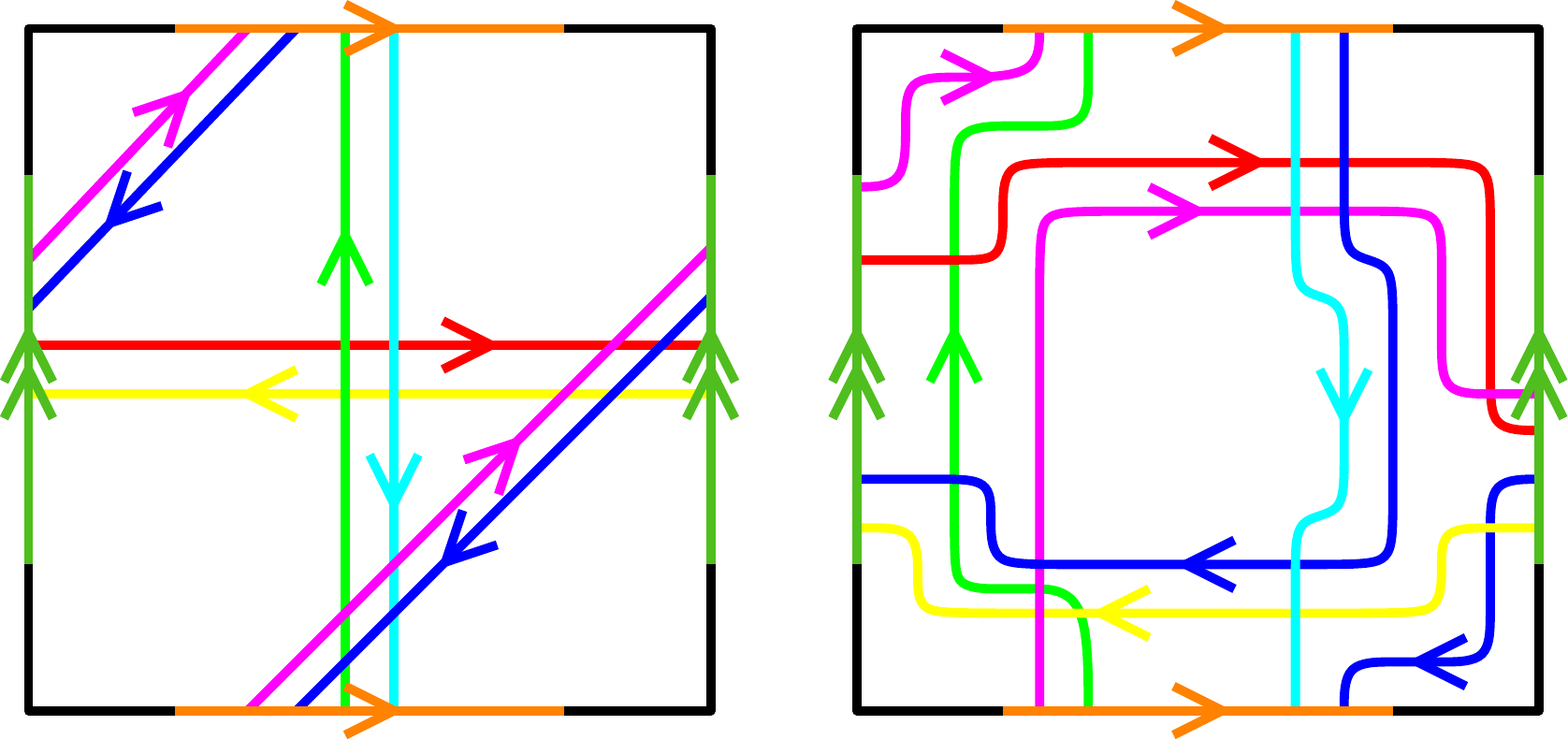}
		\caption{The curves $(1,0),( 0,-1),(-1,-1),(-1,0),(0,1), (1,1)$ on $T^2$ before and after isotoping them to agree with concentric circles co-oriented inward (except where they pass through the $1$-handles).}
		\label{hex}
	\end{center}
\end{figure}

Figure \ref{hex} shows the curves $\Gamma=(1,0),( 0,-1),(-1,-1),(-1,0),(0,1), (1,1)$ on $T^2$. As in the previous section, we isotope the curves giving a Legendrian isotopy of their co-normal lifts, in order to have them run parallel to the boundary of the $0$-handle in $T^2$ except possibly at $1$-handles. This allows us to identify them in $J^1(S^1)$. We isotope each curve so that at the end, the co-orientation points inward. Since there are multiple curves, we choose an isotopy which minimizes the number of crossings. 

\begin{figure}[ht]
	\begin{center}
		\includegraphics[width=10cm]{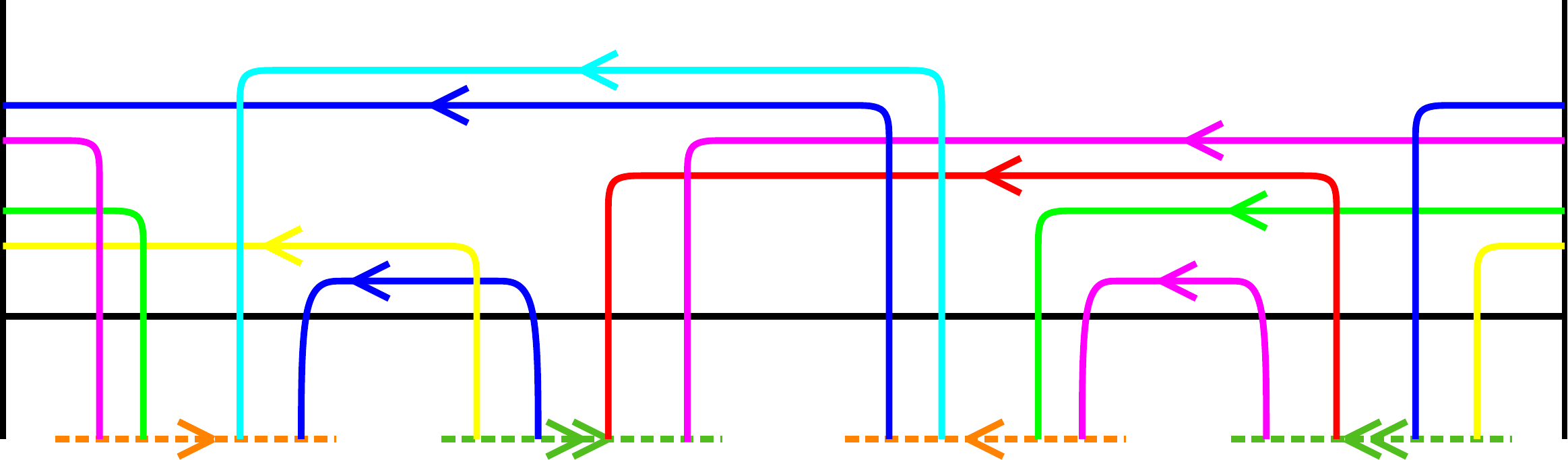}
		\caption{The curves in $J^1(S^1)$ corresponding to the curves $(1,0),( 0,-1),(-1,-1),(-1,0),(0,1), (1,1)$.}
		\label{hexjet}
	\end{center}
\end{figure}

Once these curves agree (except where they enter the $1$-handles) with parallel copies of suitable concentric circles which are positive Reeb push-offs of boundary of the $0$-handle of the $T^2$ $0$-section, we identify a neighborhood of boundary of the square (containing all our isotoped curves) with a neighborhood of the zero-section of $J^1(S^1)$. This identifies all our curves in $T^2$ with curves in $J^1(S^1)$ as in Figure \ref{hexjet}. We then satellite the image of the curves in $J^1(S^1)$ onto the image of $S^1$ in the Gompf diagram of $T^*T^2$, using the conventions described in the previous section and illustrated in Figures~\ref{fig:gradient} and~\ref{curvecorner} to maintain the relative positions of the curves. The result is Figure~\ref{hexleg}. 

\begin{figure}[ht]
	\begin{center}
		\includegraphics[width=8cm]{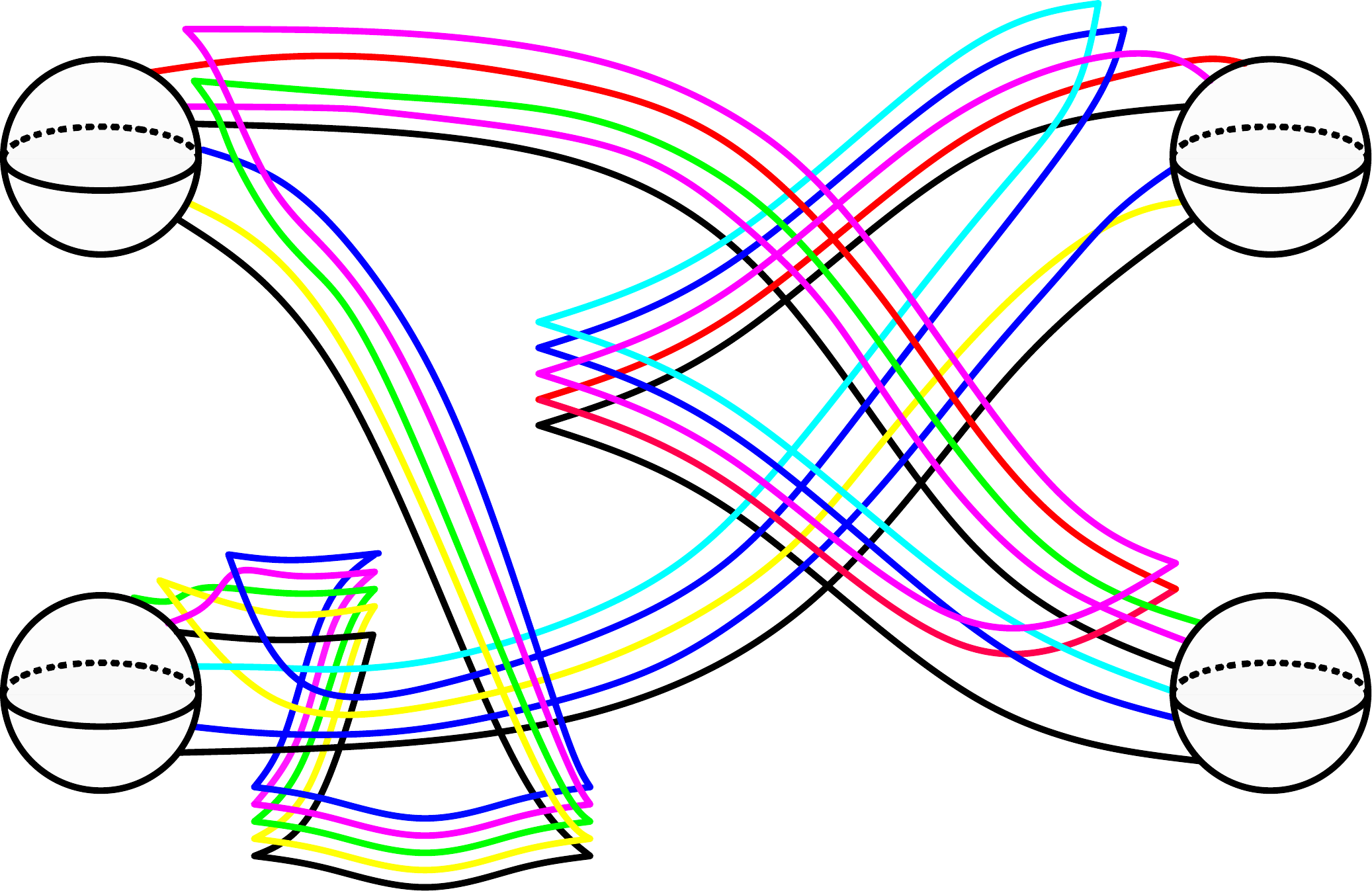}
		\caption{A Legendrian handle diagram obtained by simplifying the Legendrian handlbody diagram of the complement of the toric divisor of $\cptwo\#3\overline{\cptwo}$ smoothed in six nodes.}
		\label{hexleg}
	\end{center}
\end{figure}

\begin{figure}[ht]
	\begin{center}
		\includegraphics[width=8 cm]{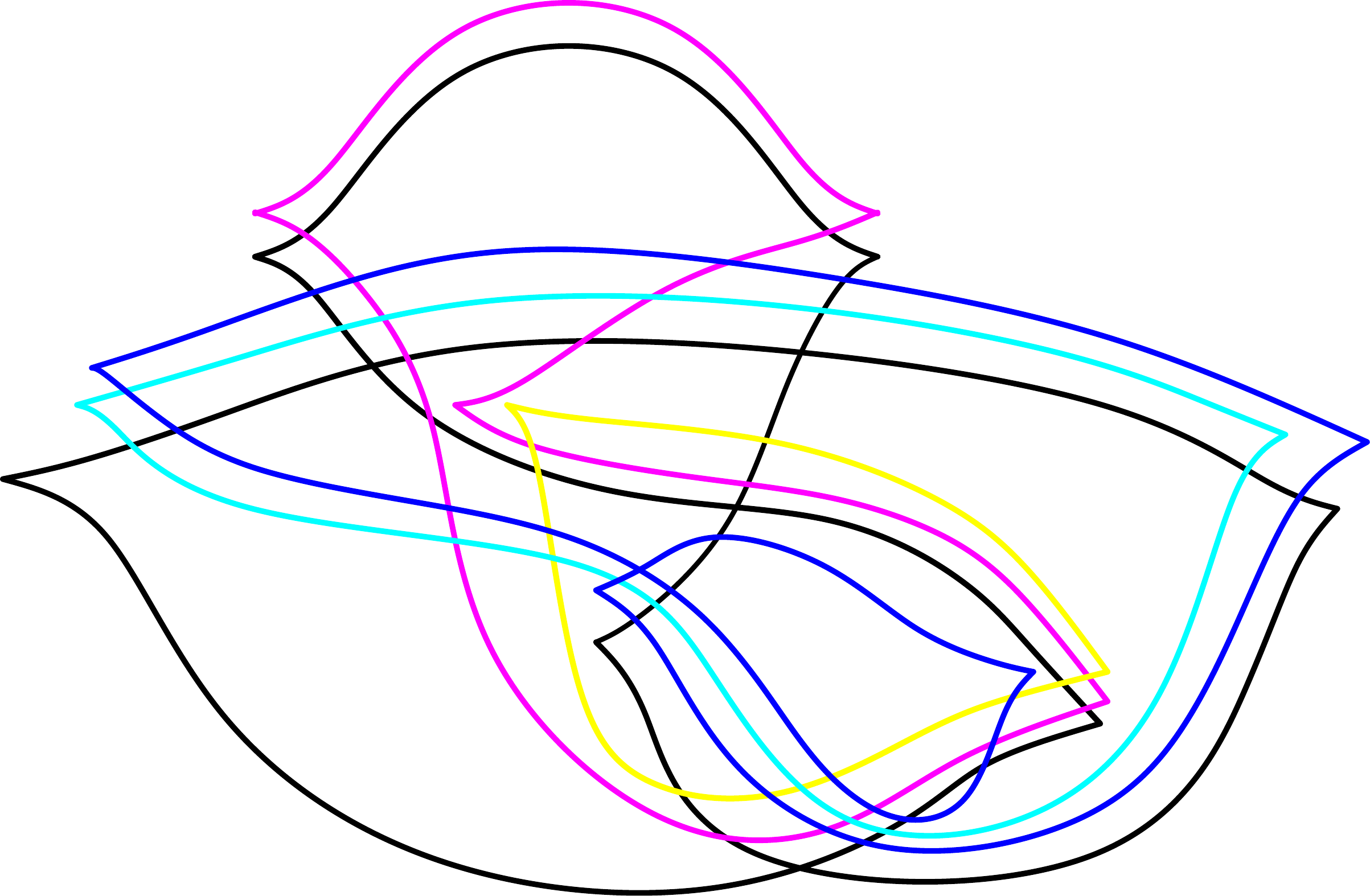}
		\caption{A Legendrian link obtained by simplifying the Legendrian handle diagram of the complement of the toric divisor of $\cptwo\#3\overline{\cptwo}$ smoothed in six nodes.}
		\label{hexfinal}
	\end{center}
\end{figure}

Performing a sequence of handle slides and Legendrian isotopy yields a simpler Weinstein handle diagram shown in Figure \ref{hexfinal}. The Weinstein handle diagram allows us to easily compute the homology of $\cptwo \# 3 \overline{\cptwo}\setminus\nu(\widetilde{\Sigma_1})$, where $\widetilde{\Sigma_1}$ is the smoothed toric divisor and $\nu(\widetilde{\Sigma_1})$ is the neighborhood of $\widetilde{\Sigma_1}$. The handle structure determines the cellular/Morse homology chain complex, which allows us to determine the homology groups. A presentation for the fundamental group can also be computed from the handle decomposition where the $1$-handles correspond to generators and $2$-handles provide relations.

In this example we obtain:
\begin{align*}
\pi_1(\cptwo \# 3 \overline{\cptwo}\setminus\nu(\widetilde{\Sigma_1}); \Z) &= 0, \\
H_0(\cptwo \# 3 \overline{\cptwo}\setminus\nu(\widetilde{\Sigma_1}); \Z) &= \Z, \\
H_1(\cptwo \# 3 \overline{\cptwo}\setminus\nu(\widetilde{\Sigma_1}); \Z) &= 0, \\
H_2(\cptwo \# 3 \overline{\cptwo}\setminus\nu(\widetilde{\Sigma_1}); \Z) &= \Z^{\oplus 5} \text{ with intersection form } 
  \left[ {\begin{array}{ccccc}
   0 & -2 & 1 & 0 & 0\\
   -2 & -4 & -1 & 1 & 1\\
   1 & -1 & -2 & 1 & 0\\
   0 & 1 & 1 & -2 & -1\\
   0 & 1 & 0 & -1 & -3\\
  \end{array} } \right]
,\\
H_i(\cptwo \# 3 \overline{\cptwo}\setminus\nu(\widetilde{\Sigma_1}); \Z) &=0 \text{ for $i=3,4$.}
\end{align*}


Using the same toric diagram, instead of smoothing all six nodes, we can choose to smooth only a subset of them. The symplectic topology of the corresponding complement will differ (with fewer handles attached). We consider here an example yielding a Weinstein manifold of interest.
 Let us smooth three alternating nodes in $\cptwo \# 3 \overline{\cptwo}$. The complement of the smoothed divisor given by  $T^*T^2$ with $2$-handles attached along $\Lambda_{(1,0)}, \Lambda_{(-1,-1)}, \Lambda_{(0,1)}$ is pictured in Figure \ref{althexstart}. 
 Performing a sequence of handle slides and Legendrian isotopy on Figure \ref{althexstart} yields a simpler Weinstein handle diagram shown in Figure~\ref{althexfinal} consisting of a Legendrian (2,4) torus knot. 

\begin{figure}[ht]
\begin{center}
\includegraphics[width=8 cm]{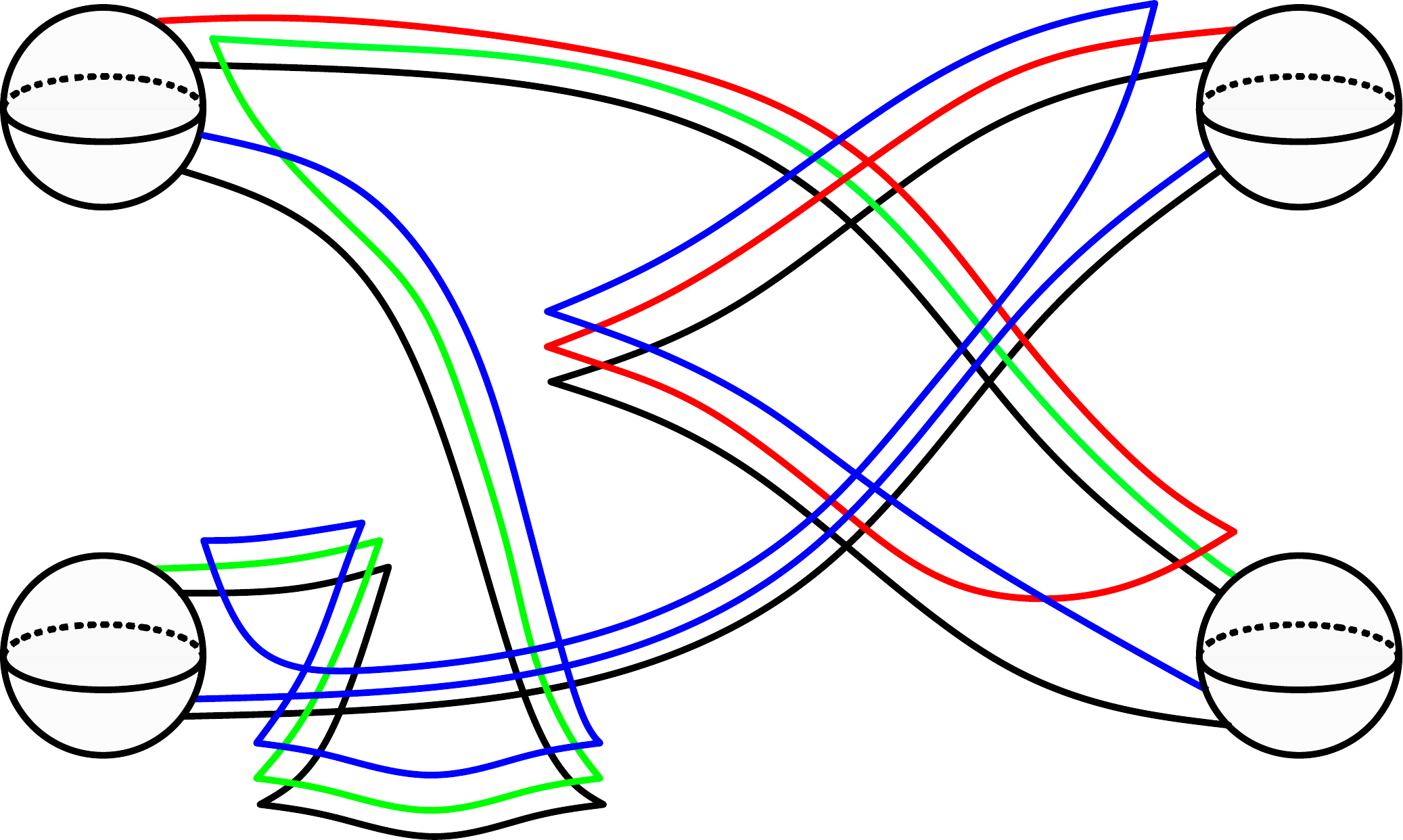}
\caption{A Legendrian handle diagram obtained by simplifying the Legendrian handlbody diagram of the complement of the toric divisor of $\cptwo\#3\overline{\cptwo}$ smoothed in three nodes.}
\label{althexstart}
\end{center}
\end{figure}


\begin{figure}[ht]
\begin{center}
\includegraphics[width=5 cm]{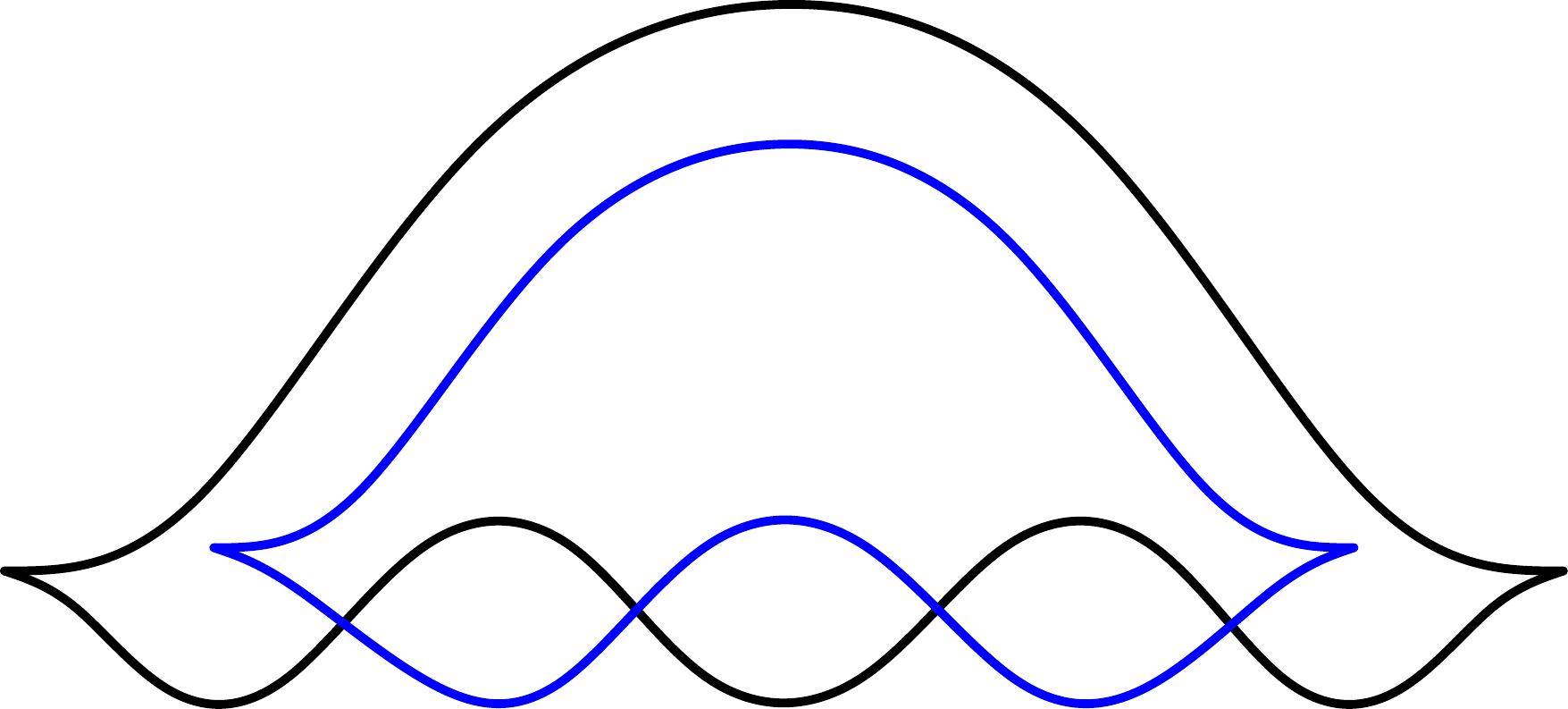}
\caption{A Legendrian (2,4)-torus link obtained from Figure \ref{althexstart} by a sequence of handleslides and Legendrian isotopy.}
\label{althexfinal}
\end{center}
\end{figure}

In this picture we can see two Lagrangian spheres in distinct homology classes in the complement of this smoothed divisor. Again, the homology of $\cptwo \# 3 \overline{\cptwo}\setminus\nu(\widetilde{\Sigma_2})$, where $\widetilde{\Sigma_2}$ is the smoothed toric divisor and $\nu(\widetilde{\Sigma_2})$ is the neighborhood of $\widetilde{\Sigma_2}$ can be computed from the Weinstein handle diagram. In particular,
\begin{align*}
\pi_1(\cptwo \# 3 \overline{\cptwo}\setminus\nu(\widetilde{\Sigma_2}); \Z) &= 0, \\
H_0(\cptwo \# 3 \overline{\cptwo}\setminus\nu(\widetilde{\Sigma_2}); \Z) &= \Z, \\
H_1(\cptwo \# 3 \overline{\cptwo}\setminus\nu(\widetilde{\Sigma_2}); \Z) &= 0, \\
H_2(\cptwo \# 3 \overline{\cptwo}\setminus\nu(\widetilde{\Sigma_2}); \Z) &= \Z\oplus\Z \text{ with intersection form } \left[ {\begin{array}{cc}
   -2 & 2 \\
   2 & -2 \\ 
  \end{array} } \right]
,\\
H_i(\cptwo \# 3 \overline{\cptwo}\setminus\nu(\widetilde{\Sigma_2}); \Z) &=0 \text{ for $i=3,4$.}
\end{align*}


\begin{figure}[h]
	\begin{center}
		\begin{tikzpicture}
			\node[inner sep=0] at (0,0) {\includegraphics[width=5 cm]{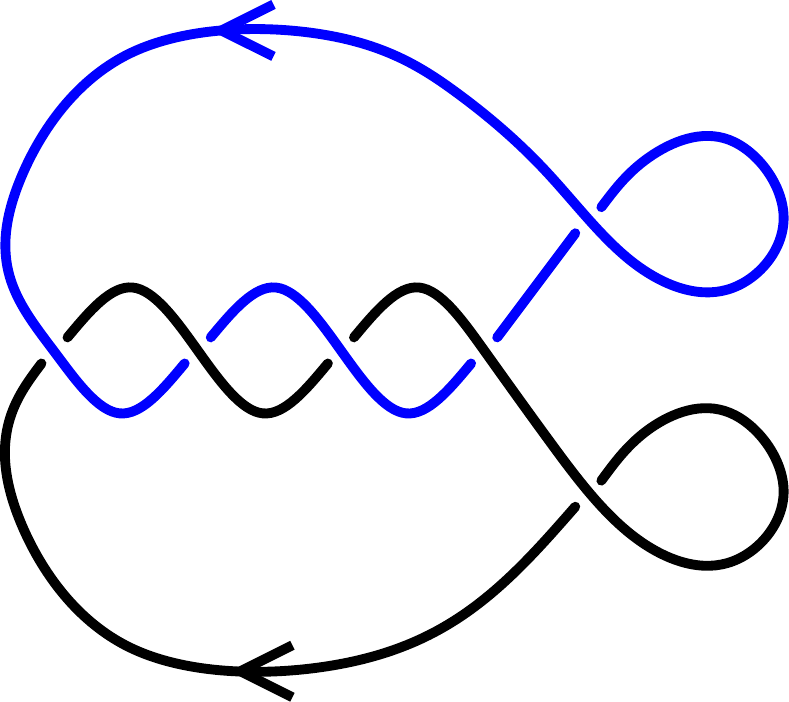}};
			\node at (-1.8,0){$x$};
			\node at (-0.9,0){$y$};
			\node at (0,0){$z$};
			\node at (1,0){$w$};
			\node at (0.9,0.9){$a$};
			\node at (0.9,-0.9){$b$};
			\node at (2.47,0.85){\textbullet};
			\node at (2.47,-0.9){\textbullet};
			\node at (2.8,0.85){$t_1$};
			\node at (2.8,-0.9){$t_2$};
		\end{tikzpicture}
		\caption{A diagram of the complement of the toric divisor of $\cptwo\#3\overline{\cptwo}$ smoothed in three nodes in the Lagrangian projection with labelled Reeb chords and marked points generating the Chekanov-Eliashberg DGA.}
		\label{althexlag}
	\end{center}
\end{figure}

The Weinstein diagram allows us to also compute the DGA of $\Lambda \in S^3$, $\mathcal{A}(\Lambda)$ (see~\cite{NE} for a good survey on how to calculate the Legendrian DGA). In this case is generated by $a,b,x,y,z,w$ and marked points $t_1$ and $t_2$, as shown in Figure \ref{althexlag}. The generators have gradings
$$|a|=|b|=1, |x|=|y|=|z|=|w|=0$$
with differential
\begin{align*}
\partial x &= \partial y=\partial z =\partial w=0\\
\partial a   &= xyzw+xw+xy+zw+1+t_1^{-1}\\
\partial ab  &= wzyx+yx+wz+wx+1+t_2.
\end{align*}

The DGA of $\Lambda$ is an interesting invariant to consider because the Wrapped Fukaya chain complex of $\cptwo \# 3 \overline{\cptwo}\setminus\nu(\widetilde{\Sigma_2})$ is $A_{\infty}$-quasi isomorphic to the $DGA$ $\mathcal{A}(\Lambda)$~\cite{BEE,Ekholm,EkholmLekili}. Note that $\mathcal{A}(\Lambda)$ has no negatively graded Reeb chords and therefore the degree-$0$ Legendrian contact homology of $\mathcal{A}(\Lambda)$, denoted by $LCH_0(\Lambda)$, is finitely generated. Furthermore, it is expected that $LCH_0(\Lambda)$ is Morita equivalent to a commutative ring $R$ such that $\hat{X}=Spec (R)$ where $\hat{X}$ is the mirror of $\cptwo \# 3 \overline{\cptwo}\setminus\nu(\widetilde{\Sigma_2})$. See~\cite{CM, EtguLekili} for other examples of such a phenomenon.


\bibliography{references}

\bibliographystyle{amsalpha}

\end{document}